\theoremstyle{plain}
\newtheorem{thm}{Theorem}
\newtheorem{lem}[thm]{Lemma}
\newtheorem{prop}[thm]{Proposition}
\newtheorem{cor}[thm]{Corollary}
\newtheorem{remark}[thm]{Remark}
\theoremstyle{definition}
\newtheorem{definition}[thm]{Definition}
\numberwithin{thm}{section}
\newcommand{\adj}{\leftrightarrow}
\newcommand{\adjeq}{\leftrightarroweq}
\DeclareMathOperator{\id}{id}
\DeclareMathOperator{\Fix}{Fix}
\def\Z{{\mathbb Z}}
\def\N{{\mathbb N}}
\title{Freezing and Cold Sets for Digital Cones and Suspensions}
\author{Laurence Boxer
\thanks{
    Department of Computer and Information Sciences,
    Niagara University,
    Niagara University, NY 14109, USA;
    and Department of Computer Science and Engineering,
    State University of New York at Buffalo.
    email: boxer@niagara.edu
}
}
\date{ }
\begin{document}
\maketitle{}

\begin{abstract}
Cone and suspension
constructions have been
introduced in digital
topology, modeled on those
of classical topology.
For digital cones and 
suspensions, and for some
related digital images, we
find freezing and cold sets.

Key words and phrases: digital topology, digital image, cone,
suspension, freezing set, cold set, limiting set

MSC: 54B20, 54C35
\end{abstract}

\maketitle

\section{Introduction}
Although in some ways
digitally continuous functions on
digital images are similar
to classical topology's
continuous functions on
subsets of Euclidean spaces,
the locally finite nature of
digital images often
restricts digitally
continuous functions in
ways that classical continuous
functions are not restricted.
The fixed point theory of
digital images has yielded
many such restrictions, including
freezing sets and cold sets.

For a finite digital image $(X,\kappa)$, it is often
possible to compute a freezing or cold set~$A$ such
that $\#A$ is much less than~$\#X$ and
the behavior of a continuous self-map~$f$ on~$X$
is greatly influenced by the behavior of~$f|_A$.
We study freezing sets and cold sets
for digital cones, suspensions, and some 
cone-like and suspension-like digital images.

\section{Preliminaries}
We use $\N$ for the set of natural numbers,
$\N^* = \N \cup \{0\}$,
$\Z$ for the set of integers, and
$\#X$ for the number of distinct members of $X$.

We typically denote a (binary) digital image
as $(X,\kappa)$, where $X$ is a nonempty subset
of $\Z^n$ for some
$n \in \N$ and $\kappa$ represents an adjacency
relation of pairs of distinct points in $X$. Thus,
$(X,\kappa)$ is an undirected graph, in
which members of $X$ may be
thought of as black points, and members of $\Z^n \setminus X$
as white points, of a picture of some ``real world" 
object or scene.

\subsection{Adjacencies}
Let $u,n \in \N$, $1 \le u \le n$. 
The most-used adjacencies
in digital topology are the
$c_u$ adjacencies, especially $c_1$ and $c_n$.
These are defined as
follows. Let $x,y \in \Z^n$,
\[ x = (x_1, \ldots, x_n),~~~~~y = (y_1,\ldots, y_n). \]
We say $x$ and $y$ are 
{\em $c_u$-adjacent} if
\begin{itemize}
    \item $x \neq y$, and
    \item for at most $u$ indices~$i$, 
          $\mid x_i - y_i \mid = 1$, and
    \item for all indices $j$ such that 
          $\mid x_j - y_j \mid \neq 1$, we have
          $x_j = y_j$.
\end{itemize}

In low dimensions, many 
authors denote a
$c_u$ adjacency by the number of points that can
have this adjacency with a given point in $\Z^n$. E.g.,
\begin{itemize}
    \item In $\Z^1$, $c_1$-adjacency is 2-adjacency.
    \item In $\Z^2$, $c_1$-adjacency is 4-adjacency and
          $c_2$-adjacency is 8-adjacency.
    \item In $\Z^3$, $c_1$-adjacency is 8-adjacency,
          $c_2$-adjacency is 18-adjacency, and
          $c_3$-adjacency is 26-adjacency.
\end{itemize}

We use the notations $y \adj_{\kappa} x$, or, when
the adjacency $\kappa$ can be assumed, $y \adj x$, to mean
$x$ and $y$ are $\kappa$-adjacent.
The notations $y \adjeq_{\kappa} x$, or, when
$\kappa$ can be assumed, $y \adjeq x$, mean either
$y=x$ or $y \adj_{\kappa} x$.

Let
\[
N(X,x,\kappa) = \{ \, y \in X \mid y \adjeq_{\kappa} x \, \}.
\]

A sequence $P=\{y_i\}_{i=0}^m$ in a digital image $(X,\kappa)$ is
a {\em $\kappa$-path from $a \in X$ to $b \in X$} if
$a=y_0$, $b=y_m$, and $y_i \adjeq_{\kappa} y_{i+1}$ 
for $0 \leq i < m$. 
We say~$m$ is the 
{\em length} of the path:
\[ m = length(P).\]

$X$ is {\em $\kappa$-connected}~\cite{Rosenfeld},
or {\em connected} when $\kappa$
is understood, if for every pair of points $a,b \in X$ there
exists a $\kappa$-path in $X$ from $a$ to $b$. 

The metric used throughout this paper is
the following.

\begin{definition}
{\rm \cite{ChartTian}}
Let $(X,\kappa)$ be a connected graph. The 
{\em shortest path metric} for $(X,\kappa)$ is
\[ d(x,y) = \min\{length(P) ~|~ P 
   \mbox{ is a $\kappa$-path in $X$ from $x$ to $y$}\},
   \mbox{ for } x,y \in X.
\]
\end{definition}
When it is useful to clarify
the digital image considered,
we use the notation
$d_{(X,\kappa)}(x,y)$ for
this metric.

\subsection{Digitally continuous functions}
Digital continuity preserves connectedness, as at
Definition~\ref{continuous}. By
using adjacency as our notion 
of ``closeness," we get Theorem~\ref{continuityPreserveAdj}.

\begin{definition}
\label{continuous}
{\rm ~\cite{Boxer99} (generalizing a definition of~\cite{Rosenfeld})}
Let $(X,\kappa)$ and $(Y,\lambda)$ be digital images.
A function $f: X \rightarrow Y$ is 
$(\kappa,\lambda)$-continuous if for
every $\kappa$-connected $A \subset X$ we have that
$f(A)$ is a $\lambda$-connected subset of $Y$.
\end{definition}

If $Y \subset X$, we use the abbreviation
{\em $\kappa$-continuous} for 
{\em $(\kappa,\kappa)$-continuous}.

When the adjacency relations are understood, we will simply say that $f$ is \emph{continuous}. Continuity can be expressed in terms of adjacency of points:

\begin{thm}
{\rm ~\cite{Rosenfeld,Boxer99}}
\label{continuityPreserveAdj}
A function $f: X \to Y$ is
$(\kappa,\lambda)$-continuous 
if and only if
$x \adj_{\kappa} x'$  
implies $f(x) \adjeq_{\lambda} f(x')$.
\end{thm}

See also~\cite{Chen94,Chen04}, where similar notions are referred to as {\em immersions}, {\em gradually varied operators},
and {\em gradually varied mappings}.

A digital {\em isomorphism} (called {\em homeomorphism}
in~\cite{Boxer94}) is a $(\kappa,\lambda)$-continuous
surjection $f: X \to Y$ such that $f^{-1}: Y \to X$ is
$(\lambda,\kappa)$-continuous.

The literature uses {\em path} polymorphically: a
$(c_1,\kappa)$-continuous function $f: [0,m]_{\Z} \to X$
is a $\kappa$-path if $f([0,m]_{\Z})$ is a 
$\kappa$-path from $f(0)$ 
to $f(m)$ as described above.

We define~\cite{bs20}
\[ C(X,\kappa) =
\{ \, f: X \to X \mid f
   \mbox{ is $\kappa$-continuous} \, \}. 
\]

Let $X \subset \Z^n$. The {\em projection functions}
$p_i: X \to \Z$ are defined by
\[ p_i(x_1, \ldots, x_i, \ldots, x_n) = x_i.
\]
The projection functions
are $(c_u,c_1)$-continuous
for $1 \le u \le n$~\cite{Han05}.

For a function $f: X \to X$, we denote by
$\Fix(f)$ the set of fixed points of $f$, i.e.,
$x \in \Fix(f)$ means $f(x)=x$.

\subsection{Freezing and cold sets}
Material in this section is largely quoted or paraphrased from~\cite{BxFpSets2}.

In a Euclidean space, knowledge of the fixed point set of a continuous self-map
$f: X \to X$ often gives little information about $f|_{X \setminus \Fix(f)}$. By contrast,
knowledge of $\Fix(f)$ for $f \in C(X,\kappa)$ can tell us much about $f|_{X \setminus \Fix(f)}$.
This motivates the study of freezing and cold sets.

\begin{definition}
\label{freezeDef}
{\rm \cite{BxFpSets2}}
Let $(X,\kappa)$ be a digital image. We say $A \subset X$ is a 
{\em freezing set for $(X,\kappa)$} if given $g \in C(X,\kappa)$,
$A \subset \Fix(g)$ implies $g=\id_X$. If no proper subset of a freezing set $A$ 
is a freezing set for $(X,\kappa)$, then $A$ is
a {\em minimal freezing set} for $(X,\kappa)$.
\end{definition}

\begin{thm}
{\rm \cite{BxFpSets2}}
\label{corners-min}
Let $X = \Pi_{i=1}^n [0,m_i]_{\Z}$.
Let $A = \Pi_{i=1}^n \{0,m_i\}$.
\begin{itemize}
\item Let $Y = \Pi_{i=1}^n [a_i,b_i]_{\Z}$ be
      such that $X \subset Y$. Let $f: X \to Y$ be
      $c_1$-continuous. If $A \subset \Fix(f)$, then $X \subset \Fix(f)$.
\item $A$ is a freezing set for $(X,c_1)$; minimal for $n \in \{1,2\}$.
\end{itemize}
\end{thm}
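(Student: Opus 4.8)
The plan is to exploit the fact that a $c_1$-continuous map between boxes is non-expansive for the shortest-path metric, which on a box coincides with the $\ell^1$ metric. First I would record that on $X = \Pi_{i=1}^n [0,m_i]_{\Z}$ the $c_1$ shortest-path metric satisfies $d_X(x,y) = \sum_{i=1}^n |x_i - y_i|$, and likewise on $Y$; since $X \subset Y$ are both boxes, $d_Y(x,y) = d_X(x,y)$ for $x,y \in X$. Then, given a shortest $c_1$-path $z_0,\ldots,z_k$ from $x$ to $y$, Theorem~\ref{continuityPreserveAdj} makes consecutive images satisfy $f(z_i) \adjeq f(z_{i+1})$, so $d_Y(f(z_i),f(z_{i+1})) \le 1$ and the triangle inequality yields non-expansiveness $d_Y(f(x),f(y)) \le d_X(x,y)$.

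For the first bullet, fix $x \in X$, write $f(x) = (f_1(x),\ldots,f_n(x)) \in Y$, and run a betweenness argument. For any two opposite corners $v, v' \in A$ (differing in every coordinate) one has $d(v,x) + d(x,v') = d(v,v')$, so $x$ lies on a $v$-$v'$ geodesic; since $f(v)=v$ and $f(v')=v'$, non-expansiveness gives $d(v,f(x)) \le d(v,x)$ and $d(f(x),v') \le d(x,v')$, whereas the triangle inequality gives $d(v,v') \le d(v,f(x)) + d(f(x),v')$, so adding forces both inequalities to be equalities. Applying this to $\mathbf{0} = (0,\ldots,0)$ and $\mathbf{m} = (m_1,\ldots,m_n)$ turns the tightness into $\sum_i \bigl(|f_i(x)| + |f_i(x)-m_i|\bigr) = \sum_i m_i$; since each summand is at least $m_i$, every summand is tight, giving $0 \le f_i(x) \le m_i$ and hence $f(x) \in X$. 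With $f(x) \in X$ the absolute values disappear, and running $d(v,f(x)) = d(v,x)$ over the opposite pair $v_S, v_{S^c}$ for each $S \subseteq \{1,\ldots,n\}$ reduces, after cancelling the $m_i$ terms, to $\sum_{i\in S} g_i = \sum_{i\notin S} g_i$ where $g_i = f_i(x) - x_i$. Equivalently $\sum_{i=1}^n \epsilon_i g_i = 0$ for the sign vector that is $-1$ on $S$ and $+1$ off $S$; as $S$ varies, $\epsilon$ runs over $\{-1,+1\}^n$, and the all-ones vector together with each single flip forces $g_i = 0$ for all $i$. Thus $f(x) = x$, establishing $X \subset \Fix(f)$.

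The second bullet's freezing property is the first bullet specialized to $Y = X$. For minimality (assuming each $m_i \ge 1$, so that $X$ is genuinely $n$-dimensional) I would show that no corner may be dropped by exhibiting, for each corner $v \in A$, a map $f_v \in C(X,c_1)$ that fixes $A \setminus \{v\}$ yet is not $\id_X$. I would take $f_v$ to fix every point except $v$, which it pushes one unit along the inward main diagonal (e.g.\ $(0,\ldots,0) \mapsto (1,\ldots,1)$). The only adjacencies to verify are those between $v$ and its $c_1$-neighbors; each such neighbor is fixed, and its image sits at $\ell^1$-distance $n-1$ from $f_v(v)$, which is at most $1$ (so that $f_v$ stays $c_1$-continuous) exactly when $n \le 2$. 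This pinpoints why minimality is asserted only for $n \in \{1,2\}$.

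I expect the main obstacle to be the coordinate-extraction step: the betweenness argument by itself only pins down sums of coordinates, and the crucial realization is that running it over all $2^n$ opposite corner pairs produces the full family of sign-vector equations $\sum_i \epsilon_i g_i = 0$, whose only common solution is $g = 0$. Establishing $f(x)\in X$ first (via the single pair $\mathbf{0},\mathbf{m}$) is what allows the absolute values to be dropped so these equations become linear in $g$. The continuity checks for the minimality maps are then routine once the $\ell^1$ viewpoint is in place.
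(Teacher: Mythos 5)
Your proof is correct, but there is nothing in this paper to compare it against line by line: Theorem~\ref{corners-min} is quoted from \cite{BxFpSets2} without proof. The cited source's argument (whose machinery this paper imports as Lemma~\ref{c1pulling}) is genuinely different from yours: it settles the one-dimensional case by repeated application of the pulling lemma (a point moved in coordinate $i$ forces the point behind it, and ultimately an endpoint, to move), and then handles dimension $n$ by slicing -- the codomain is allowed to be a larger box $Y$ precisely so that the statement can be applied inductively to faces and then to the columns joining two already-fixed faces. Your route replaces this induction with metric geometry: the $c_1$ shortest-path metric on a box is the $\ell^1$ metric, continuous maps are non-expansive, and every point of $X$ lies on a geodesic between each pair of opposite corners, so tightness plus the sign-vector equations $\sum_i \epsilon_i g_i = 0$ pins $f(x)$ down; your preliminary step using the pair $(0,\ldots,0)$, $(m_1,\ldots,m_n)$ to force $f(x) \in X$ is exactly what lets the absolute values be dropped, and it handles the larger codomain $Y$ cleanly. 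Your minimality construction (push one corner one unit along the inward diagonal, fix everything else) is the standard one and correctly isolates why $n \le 2$ is required; the only caveats are the ones you flag yourself -- it presumes every $m_i \ge 1$ (degenerate boxes reduce to lower dimension) -- plus the small observation that ruling out each $A \setminus \{v\}$ suffices because every proper subset of $A$ lies in some $A \setminus \{v\}$. As for what each approach buys: the pulling-lemma technique is the workhorse of this paper (it reappears in the proofs of Theorems~\ref{minFreezeForQ_n} and~\ref{minK_nFreezingSet}) and adapts to non-box images, whereas your $\ell^1$ argument is tied to the geodesic convexity of boxes under $c_1$ -- consistent with the fact that the corners alone do \emph{not} freeze $(X,c_n)$, where by Theorem~\ref{noProperSub} the full boundary is a minimal freezing set.
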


\begin{thm}
{\rm \cite{BxFpSets2}}
\label{freezeInvariant}
Let $A$ be a freezing set for the digital image $(X,\kappa)$ and let
$F: (X,\kappa) \to (Y,\lambda)$ be an isomorphism. Then $F(A)$ is
a freezing set for $(Y,\lambda)$.
\end{thm}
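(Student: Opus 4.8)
The plan is to prove the statement directly from Definition~\ref{freezeDef} by a conjugation argument. Let $g \in C(Y,\lambda)$ be arbitrary with $F(A) \subset \Fix(g)$; the goal is to show $g = \id_Y$. The natural object to consider is the conjugate $h = F^{-1} \circ g \circ F : X \to X$, which transports the problem back to $(X,\kappa)$, where we may invoke the hypothesis that $A$ is a freezing set.

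First I would verify that $h \in C(X,\kappa)$. Since $F$ is $(\kappa,\lambda)$-continuous, $g$ is $(\lambda,\lambda)$-continuous, and $F^{-1}$ is $(\lambda,\kappa)$-continuous, the composition $h$ is $(\kappa,\kappa)$-continuous. This follows immediately from Definition~\ref{continuous}: if $B \subset X$ is $\kappa$-connected, then $F(B)$ is $\lambda$-connected, hence $g(F(B))$ is $\lambda$-connected, hence $h(B) = F^{-1}(g(F(B)))$ is $\kappa$-connected. Equivalently, one may chain Theorem~\ref{continuityPreserveAdj} three times along a $\kappa$-adjacent pair.

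Next I would check that $A \subset \Fix(h)$. For any $a \in A$, we have $F(a) \in F(A) \subset \Fix(g)$, so $g(F(a)) = F(a)$, and therefore $h(a) = F^{-1}(g(F(a))) = F^{-1}(F(a)) = a$. Thus every point of $A$ is fixed by $h$.

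Now the freezing hypothesis applies: since $A$ is a freezing set for $(X,\kappa)$, $h \in C(X,\kappa)$, and $A \subset \Fix(h)$, Definition~\ref{freezeDef} yields $h = \id_X$. Conjugating back, $F^{-1} \circ g \circ F = \id_X$ gives $g = F \circ \id_X \circ F^{-1} = \id_Y$. As $g$ was an arbitrary $\lambda$-continuous self-map of $Y$ fixing $F(A)$, this shows $F(A)$ is a freezing set for $(Y,\lambda)$. The argument is essentially routine; the only point requiring any care is the closure of digital continuity under composition, which is where I would be most explicit, though even that is immediate from the connectedness-preservation formulation of Definition~\ref{continuous}.
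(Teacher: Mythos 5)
Your proof is correct: the conjugation argument (verify $h = F^{-1}\circ g \circ F \in C(X,\kappa)$, check $A \subset \Fix(h)$, invoke the freezing hypothesis to get $h = \id_X$, and conjugate back to $g = \id_Y$) is exactly the standard proof of this result. The present paper states the theorem without proof, citing \cite{BxFpSets2}, and your argument is essentially the one given in that source, so there is nothing to add beyond noting that your care about closure of digital continuity under composition is well placed and handled correctly via Definition~\ref{continuous}.
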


\subsection{Tools for identifying freezing sets}
The following are useful for determining fixed 
point and freezing sets.

\begin{prop}
{\rm (Corollary 8.4 of~\cite{bs20})}
\label{uniqueShortestProp}
Let $(X,\kappa)$ be a digital image and
$f \in C(X,\kappa)$. Suppose
$x,x' \in \Fix(f)$ are such that
there is a unique shortest
$\kappa$-path $P$ in~$X$ from $x$ 
to $x'$. Then $P \subseteq \Fix(f)$.
\end{prop}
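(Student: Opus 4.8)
The plan is to exploit the fact that a digitally continuous function pushes a path forward to a path of no greater length, so that the image of a shortest path is again a shortest path; the uniqueness hypothesis will then force this image to coincide with $P$ point by point. Concretely, I would track what $f$ does to the named shortest path $P$ and show that $f$ cannot perturb any of its points without either violating continuity or producing a competing shortest path.

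First I would fix notation by writing $P = \{y_i\}_{i=0}^m$ with $y_0 = x$, $y_m = x'$, and $length(P) = m = d(x,x')$, the last equality holding because $P$ is a shortest $\kappa$-path. Next I would apply Theorem~\ref{continuityPreserveAdj}: since $y_i \adjeq_{\kappa} y_{i+1}$ for each $i$ and $f \in C(X,\kappa)$, we get $f(y_i) \adjeq_{\kappa} f(y_{i+1})$ for each $i$. Because $x,x' \in \Fix(f)$, the sequence $f(P) = \{f(y_i)\}_{i=0}^m$ is therefore a $\kappa$-path from $f(y_0)=f(x)=x$ to $f(y_m)=f(x')=x'$, and it has $m$ steps, so $length(f(P)) = m$.

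I would then observe that since $d(x,x') = m$ and $f(P)$ is a $\kappa$-path from $x$ to $x'$ of length $m$, the path $f(P)$ attains the minimum length and is thus itself a shortest $\kappa$-path from $x$ to $x'$. By the uniqueness hypothesis, $f(P) = P$ as index sequences, which means $f(y_i) = y_i$ for every $i$; that is, $P \subseteq \Fix(f)$, as desired.

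The step I expect to require the most care is confirming that $f(P)$ is genuinely a \emph{shortest} path. Continuity only guarantees $f(y_i) \adjeq_{\kappa} f(y_{i+1})$, which permits the degenerate case $f(y_i) = f(y_{i+1})$ (a ``stall''); a priori such a stall, or a repeated vertex, could let $f(P)$ collapse to a strictly shorter path from $x$ to $x'$. I would dispose of this by noting that any such collapse would yield a $\kappa$-path of length $< m$ from $x$ to $x'$, contradicting $d(x,x') = m$; hence no stall or repetition occurs and $f(P)$ has length exactly $m$. The remaining subtlety is purely bookkeeping: once $f(P)=P$ is established as an equality of sequences indexed $0,\dots,m$, the conclusion $f(y_i)=y_i$ is immediate.
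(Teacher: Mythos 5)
Your proof is correct, but there is nothing in this paper to compare it against: Proposition~\ref{uniqueShortestProp} is stated without proof, being imported as Corollary 8.4 of~\cite{bs20}. Judged on its own merits, your argument is sound and self-contained. The push-forward step is legitimate: by Theorem~\ref{continuityPreserveAdj}, $f(y_i) \adjeq_{\kappa} f(y_{i+1})$ for each $i$, and since $f$ fixes the endpoints, $f(P)=\{f(y_i)\}_{i=0}^m$ is a $\kappa$-path from $x$ to $x'$ of length $m=d_{(X,\kappa)}(x,x')$ under the paper's definition of path (which permits equality of consecutive entries). You also correctly identified the one delicate point, namely that continuity only yields $\adjeq$ rather than genuine adjacency, so $f(P)$ could a priori stall or revisit a point; your resolution --- any stall or repetition could be deleted to produce a path from $x$ to $x'$ of length strictly less than $d_{(X,\kappa)}(x,x')$, which is impossible --- is exactly the right way to close that gap. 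With stalls excluded, $f(P)$ attains the minimum length, hence is a shortest $\kappa$-path from $x$ to $x'$, and the uniqueness hypothesis forces the termwise identity $f(y_i)=y_i$, i.e., $P \subseteq \Fix(f)$. One cosmetic remark: since the paper defines a path as a sequence, the uniqueness hypothesis is naturally read as uniqueness of sequences, which is what your final step uses; if one insisted on reading uniqueness only up to the underlying point set, a short induction along $P$ (using that $y_{i+2}$ cannot be adjacent to $y_i$ on a shortest path) would still recover $f(y_i)=y_i$, so the conclusion is robust to that ambiguity.
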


Lemma~\ref{c1pulling}, below,
\begin{quote}
$\ldots$ can
be interpreted to say that
in a $c_u$-adjacency,
a continuous function that
moves a point~$p$ also moves
a point that is ``behind"
$p$. E.g., in $\Z^2$, if $q$ and $q'$ are
$c_1$- or $c_2$-adjacent with $q$
left, right, above, or below $q'$, and a
continuous function $f$ moves $q$ to the left,
right, higher, or lower, respectively, then
$f$ also moves $q'$ to the left,
right, higher, or lower, respectively~\cite{BxFpSets2}.
\end{quote}

\begin{lem}
\label{c1pulling}
{\rm ~\cite{BxFpSets2}}
Let $(X,c_u)\subset \Z^n$ be a digital image, 
$1 \le u \le n$. Let $q, q' \in X$ be such that
$q \adj_{c_u} q'$.
Let $f \in C(X,c_u)$.
\begin{enumerate}
    \item If $p_i(f(q)) > p_i(q) > p_i(q')$
          then $p_i(f(q')) > p_i(q')$.
    \item If $p_i(f(q)) < p_i(q) < p_i(q')$
          then $p_i(f(q')) < p_i(q')$.
\end{enumerate}
\end{lem}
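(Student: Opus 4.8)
The plan is to prove statement~(1) by a short coordinate computation and to obtain statement~(2) by the symmetric argument gotten from reversing the inequalities in the $i$-th coordinate. Throughout I would abbreviate the relevant $i$-th coordinates, writing $a = p_i(q)$, $a' = p_i(q')$, $b = p_i(f(q))$, and $b' = p_i(f(q'))$. With this notation the hypothesis of~(1) reads $b > a > a'$ and the desired conclusion is $b' > a'$.

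The first ingredient I would isolate is the elementary consequence of the $c_u$-adjacency definition that \emph{adjacent points differ by at most $1$ in every coordinate}: if $x \adj_{c_u} y$ then for each index $j$ the definition forces either $|p_j(x) - p_j(y)| = 1$ or $p_j(x) = p_j(y)$, so $|p_j(x) - p_j(y)| \le 1$; the same bound holds trivially if $x = y$. Applying this to $q \adj_{c_u} q'$ gives $|a - a'| \le 1$, and since $a$ and $a'$ are integers with $a > a'$ this forces $a = a' + 1$. Next I would bring in continuity: because $q \adj_{c_u} q'$ and $f \in C(X,c_u)$, Theorem~\ref{continuityPreserveAdj} yields $f(q) \adjeq_{c_u} f(q')$, and hence---covering both the adjacent and the equal cases of $\adjeq_{c_u}$ by the bound just established---we obtain $|b - b'| \le 1$.

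It then remains to combine these facts. From $b > a = a' + 1$ we get $b \ge a' + 2$. If the conclusion failed, so that $b' \le a'$, then $b - b' \ge (a'+2) - a' = 2$, contradicting $|b - b'| \le 1$; therefore $b' > a'$, which is exactly $p_i(f(q')) > p_i(q')$, proving~(1). For~(2) I would apply~(1) to the conjugate $g = R \circ f \circ R$, where $R$ is the reflection negating the $i$-th coordinate; since $R$ preserves the absolute value of every coordinate difference it is a $c_u$-isomorphism that is its own inverse, so $g \in C(X',c_u)$ on $X' = R(X)$, and $R$ reverses every $i$-th-coordinate inequality, converting the hypothesis and conclusion of~(2) into those of~(1) for $g$ at the points $R(q), R(q')$. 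I do not expect any real obstacle here; the only step needing a moment's care is remembering that $\adjeq_{c_u}$ permits $f(q) = f(q')$, which is harmless because equality gives $b = b'$ and the conclusion $b' > a'$ still holds.
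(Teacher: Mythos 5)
Your proof is correct, and it is essentially the standard argument: the paper itself does not prove Lemma~\ref{c1pulling} (it is quoted from~\cite{BxFpSets2}), but the proof in that source runs exactly along your lines---adjacent points of $\Z^n$ differ by at most $1$ in each coordinate, so $p_i(q)=p_i(q')+1$ and $p_i(f(q))\ge p_i(q')+2$, whence $p_i(f(q'))\le p_i(q')$ would force $|p_i(f(q))-p_i(f(q'))|\ge 2$, contradicting $f(q)\adjeq_{c_u} f(q')$ from Theorem~\ref{continuityPreserveAdj}. Your reflection trick for part~(2) is a clean, correct way to avoid repeating the computation (the cited source simply runs the symmetric argument), so there is nothing to fix.
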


We extend Lemma~\ref{c1pulling} to digital
arcs as follows.

\begin{cor}
\label{pullingSegment}
    Let $1 \le u \le n$ and let
    $(R,c_u) \subset (\Z^n,c_u)$ be a
    digital arc, i.e., for some integer
    $k > 0$ there is an isomorphism
    $g: ([0,k]_{\Z}, c_1) \to (R, c_u)$.
    Let $f: R \to \Z^n$ be $c_u$-continuous.
    \begin{itemize}
        \item If $g$ is monotone 
    increasing in the $i^{th}$ coordinate, 
    i.e., $p_i(j+1) > p_i(j)$ for $0 \le j < k$,
    and $p_i(f(g(m))) > p_i(g(m))$ for some
    $m$ such that $0 < m \le k$, then
    $p_i(f(g(j))) > p_i(g(j))$ for 
    $0 \le j \le m$.
    \item If $g$ is monotone 
    decreasing in the $i^{th}$ coordinate, 
    i.e., $p_i(j+1) < p_i(j)$ for $0 \le j < k$,
    and $p_i(f(g(m))) < p_i(g(m))$ for some
    $m$ such that $0 < m \le k$, then
    $p_i(f(g(j))) < p_i(g(j))$ for 
    $m \le j \le k$.
    \end{itemize}
\end{cor}

\begin{proof}
    These assertions follow from elementary
    inductions based on Lemma~\ref{c1pulling}.
\end{proof}

We are grateful to an anonymous referee for
suggesting the following.

\begin{prop}
\label{nbrPropertyForEssentialFreezePt}
    Let $(Y,\kappa)$ be a digital image. Let
    $y \in Y$. If there exists 
    $y' \in Y \setminus \{y\}$ such that
    $N(Y,y,\kappa) \subset N(Y,y',\kappa)$, 
    then ~$y$ belongs to every freezing set 
    of~$(Y,\kappa)$.
\end{prop}

\begin{proof}
    Let $A$ be a freezing set for $(Y,\kappa)$.
    Let $y',w \in Y$ be as stated above.
    Suppose for a contradiction that 
    $y \not \in A$. Let $f: Y \to Y$ be
    given by
    \[ f(x) = 
       \left \{ \begin{array}{ll}
           x &  \mbox{if } x \neq y; \\
           w &  \mbox{if } x = y.
       \end{array}
       \right .
    \]
    By our choices of $y,y',w$, it follows
    that $f \in C(Y,\kappa)$. Further,
    $f|_A = \id_A$, but $f \neq \id_Y$,
    contrary to~$A$ being a freezing set.
    Thus~$y \in A$.
\end{proof}

\begin{definition}
\label{bdDef}
{\rm \cite{RosenfeldMAA}}
Let $X \subset \Z^n$. The
{\em boundary of} $X$ is
\[Bd(X) = \{x \in X \, | \mbox{ there exists } y \in \Z^n \setminus X \mbox{ such that } y \adj_{c_1} x\}.
\]
\end{definition}

\begin{thm}
{\rm \cite{BxFpSets2}}
\label{bdFreezes}
Let $X \subset \Z^n$ be finite. Then for 
$1 \le u \le n$, $Bd(X)$ is 
a freezing set for $(X,c_u)$.
\end{thm}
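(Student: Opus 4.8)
The plan is to argue by contradiction, using the ``pulling'' Lemma~\ref{c1pulling} to propagate any coordinate displacement of an interior point all the way out to a boundary point, where it will contradict the hypothesis that the boundary is fixed. Suppose $g \in C(X,c_u)$ satisfies $Bd(X) \subset \Fix(g)$ but $g \ne \id_X$. Then there is a point $p \in X$ with $g(p) \ne p$, so some coordinate index $i$ satisfies $p_i(g(p)) \ne p_i(p)$. Assume first that $p_i(g(p)) > p_i(p)$; the case $p_i(g(p)) < p_i(p)$ is entirely analogous, ascending along the positive $i$-axis and using part (2) of Lemma~\ref{c1pulling} in place of part (1).

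Next I would set up a descent along the negative $i$-axis. Consider the points $p - j e_i$ for $j = 0, 1, 2, \ldots$, where $e_i$ is the $i$-th standard basis vector. Because $X$ is finite and $p \in X$, there is a largest integer $k \ge 0$ such that $p - j e_i \in X$ for every $j \in \{0, \ldots, k\}$, and consequently $p - (k+1)e_i \notin X$. I claim each point $q_j := p - je_i$ in this chain is moved strictly upward in coordinate~$i$, i.e. $p_i(g(q_j)) > p_i(q_j)$. This follows by induction on $j$: the base case $j=0$ is the assumption on $p$; for the inductive step, note $q_j \adj_{c_1} q_{j+1}$ (hence $q_j \adj_{c_u} q_{j+1}$) and $p_i(q_j) > p_i(q_{j+1})$, so with $p_i(g(q_j)) > p_i(q_j)$ already known, part~(1) of Lemma~\ref{c1pulling} (taking $q = q_j$, $q' = q_{j+1}$) yields $p_i(g(q_{j+1})) > p_i(q_{j+1})$.

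Finally I would extract the contradiction at the terminal point of the chain. The point $w := q_k = p - k e_i$ lies in $X$ and is moved upward in coordinate~$i$ by the claim. On the other hand, $w - e_i = p - (k+1)e_i \in \Z^n \setminus X$ with $w - e_i \adj_{c_1} w$, so by Definition~\ref{bdDef} we have $w \in Bd(X)$; the hypothesis then forces $g(w) = w$, contradicting $p_i(g(w)) > p_i(w)$. Hence no moved point can exist and $g = \id_X$, proving that $Bd(X)$ is a freezing set for $(X, c_u)$.

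I expect the only delicate point to be the bookkeeping in the inductive descent---ensuring the chain stays inside $X$ until it first exits, so that the terminal point genuinely acquires a $c_1$-adjacent white neighbor and therefore lands in $Bd(X)$. The finiteness of $X$ is exactly what guarantees the descent terminates, and Lemma~\ref{c1pulling} does all the real work of transporting the displacement down the chain; beyond that the argument is routine.
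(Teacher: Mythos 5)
Your proof is correct. The paper does not reproduce a proof of Theorem~\ref{bdFreezes}---it quotes the result from~\cite{BxFpSets2}---and your argument (take a point moved in some coordinate, use Lemma~\ref{c1pulling} to propagate that displacement along a $c_1$-chain in the $-e_i$ direction until finiteness forces the chain to reach a point with a white $c_1$-neighbor, hence a point of $Bd(X)$, contradicting that the boundary is fixed) is essentially the same descent-to-the-boundary argument given in that cited source.
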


Moreover, we have the following.

\begin{thm}
{\rm \cite{BxFpSets2}}
\label{noProperSub}
Let $X = \prod_{i=1}^n[0,m_i]_{\Z}  \subset \Z^n$, where $m_i>1$ for all~$i$.
Then $Bd(X)$ is a minimal freezing set for $(X,c_n)$.
\end{thm}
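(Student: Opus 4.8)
The freezing property is not the issue: taking $u=n$ in Theorem~\ref{bdFreezes} already gives that $Bd(X)$ is a freezing set for $(X,c_n)$. So the entire content of the statement is \emph{minimality}, and the plan is to show that no proper subset of $Bd(X)$ freezes $(X,c_n)$. Since any proper subset $S \subsetneq Bd(X)$ is contained in $Bd(X)\setminus\{b\}$ for some $b\in Bd(X)$, it suffices to exhibit, for each $b \in Bd(X)$, a map $g\in C(X,c_n)$ with $g\neq \id_X$ and $Bd(X)\setminus\{b\}\subseteq \Fix(g)$; then $S \subseteq \Fix(g)$ as well, so $S$ cannot be a freezing set.

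Fix $b=(b_1,\dots,b_n)\in Bd(X)$. By Definition~\ref{bdDef} some coordinate of $b$ is extremal; choose an index $i$ with $p_i(b)\in\{0,m_i\}$. Define $b'$ to agree with $b$ in every coordinate except the $i$th, where we push one step \emph{inward}: set $p_i(b')=1$ if $p_i(b)=0$, and $p_i(b')=m_i-1$ if $p_i(b)=m_i$. Because $m_i>1$, in either case $b'\in X$ and $b'\neq b$. Now define $g:X\to X$ by $g(b)=b'$ and $g(x)=x$ for all $x\neq b$. Then $g\neq \id_X$, and $g$ fixes every point of $X$ other than $b$, so in particular $Bd(X)\setminus\{b\}\subseteq \Fix(g)$.

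It remains to check that $g$ is $c_n$-continuous, which by Theorem~\ref{continuityPreserveAdj} means verifying $g(x)\adjeq_{c_n}g(y)$ for every adjacent pair $x\adj_{c_n}y$. Pairs not involving $b$ are handled by $g$ being the identity there, so the only work is to show, for each $x$ with $x\adj_{c_n}b$, that $g(x)=x \adjeq_{c_n} b'=g(b)$. Here is the key point, and the place where the hypothesis $\kappa=c_n$ is essential: $c_n$-adjacency means all $n$ coordinates differ by at most $1$, so any $c_n$-neighbor $x$ of $b$ satisfies $|p_j(x)-p_j(b)|\le 1$ for every $j$. For $j\neq i$ we have $p_j(b')=p_j(b)$, so $|p_j(x)-p_j(b')|\le 1$. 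For the chosen index $i$, the extremality of $p_i(b)$ forbids a neighbor on the outward side: if $p_i(b)=0$ then $p_i(x)\in\{0,1\}$, and if $p_i(b)=m_i$ then $p_i(x)\in\{m_i-1,m_i\}$, and in both cases $|p_i(x)-p_i(b')|\le 1$. Hence $x$ and $b'$ differ by at most $1$ in every coordinate, giving $x\adjeq_{c_n}b'$ as required.

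The only nontrivial step is this continuity verification, and its whole force comes from the inward push being ``shielded'' by the boundary: a point extremal in coordinate $i$ has no $X$-neighbor beyond it in that direction, so nudging it inward cannot separate it from any of its neighbors under the permissive $c_n$-adjacency. For smaller $u$ the adjacency is more restrictive and this argument breaks down---indeed Theorem~\ref{corners-min} shows that $Bd(X)$ need not be minimal for $c_1$---which is why the statement is specific to $c_n$. With such a $g$ in hand for every $b\in Bd(X)$, minimality follows, completing the proof.
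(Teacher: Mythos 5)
Your proof is correct. Note that the paper itself states Theorem~\ref{noProperSub} as a quoted result from~\cite{BxFpSets2} and gives no proof of it, so there is nothing in this paper to compare against; your argument --- deduce freezing from Theorem~\ref{bdFreezes}, then for each $b \in Bd(X)$ build the continuous map that pushes $b$ one step inward along an extremal coordinate while fixing everything else, which is $c_n$-continuous precisely because extremality leaves no neighbor on the outward side --- is the standard one for this result (and is essentially the argument used in the cited source), with the continuity check and the use of the hypotheses $\kappa = c_n$ and $m_i > 1$ handled correctly.
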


Other tools for computing freezing sets
are studied in~\cite{BxTrimBld}.

\subsection{Cold sets}
\begin{definition}
\label{s-cold-def}
{\rm \cite{BxFpSets2}}
Given $s \in \N^*$, we say $A \subset X$ is an
{\em $s$-cold set} for the connected digital image $(X,\kappa)$
if given $f \in C(X,\kappa)$ such that
$f|_A = \id_A$, then for all $x \in X$, $d_{(X,\kappa)}(x,f(x)) \le s$.
If no proper subset of $A$ is an $s$-cold set
for $(X,\kappa)$, then $A$ is {\em minimal}.
A {\em cold set} is a 1-cold set.
\end{definition}

Note a freezing set is a 0-cold set.

\begin{thm}
\label{s-cold-invariant}
{\rm \cite{BxFpSets2}}
Let $(X,\kappa)$ be a connected digital image, 
let $A$ be an $s$-cold set for $(X,\kappa)$,
and let $F: (X,\kappa) \to (Y,\lambda)$ be an
isomorphism. Then $F(A)$ is
an $s$-cold set for $(Y,\lambda)$.
\end{thm}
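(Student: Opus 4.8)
The plan is to mirror the proof of Theorem~\ref{freezeInvariant}: pull an arbitrary continuous self-map of $Y$ that fixes $F(A)$ back to a continuous self-map of $X$ that fixes $A$, apply the $s$-cold hypothesis there, and then push the resulting displacement bound forward through~$F$. Write $B = F(A)$ and let $g \in C(Y,\lambda)$ satisfy $g|_B = \id_B$. Since $F$ is a $(\kappa,\lambda)$-continuous isomorphism and $F^{-1}$ is $(\lambda,\kappa)$-continuous, the conjugate $f = F^{-1} \circ g \circ F$ is a composition of continuous functions and hence lies in $C(X,\kappa)$.

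Next I would verify $f|_A = \id_A$. For $a \in A$ we have $F(a) \in B$, so $g(F(a)) = F(a)$ and therefore $f(a) = F^{-1}(g(F(a))) = F^{-1}(F(a)) = a$. Because $A$ is an $s$-cold set for $(X,\kappa)$, Definition~\ref{s-cold-def} gives $d_{(X,\kappa)}(x, f(x)) \le s$ for every $x \in X$.

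The step that carries the real content is transferring this bound from $X$ to $Y$, which rests on the fact that a digital isomorphism is an isometry for the shortest-path metric: $d_{(Y,\lambda)}(F(x), F(x')) = d_{(X,\kappa)}(x,x')$ for all $x,x' \in X$. I would establish this by noting that $F$ carries a $\kappa$-path in $X$ to a $\lambda$-path of equal length in $Y$ (continuity preserves adjacency by Theorem~\ref{continuityPreserveAdj}), and $F^{-1}$ does the same in reverse; hence $F$ induces a length-preserving bijection between paths joining $x,x'$ and paths joining $F(x),F(x')$, so the two minima coincide. This is the main obstacle in the sense that it is the one place where both directions of the isomorphism must be used; the remaining bookkeeping is routine.

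Finally I would conclude. Given any $y \in Y$, write $y = F(x)$ with $x \in X$, using surjectivity of~$F$. By the definition of $f$ we have $F(f(x)) = g(F(x)) = g(y)$, so applying the isometry fact with second argument $f(x)$ yields $d_{(Y,\lambda)}(y, g(y)) = d_{(Y,\lambda)}(F(x), F(f(x))) = d_{(X,\kappa)}(x, f(x)) \le s$. Since $y$ was arbitrary and $g$ was an arbitrary continuous self-map of $Y$ fixing $B$ pointwise, $F(A) = B$ is an $s$-cold set for $(Y,\lambda)$.
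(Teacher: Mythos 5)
Your proposal is correct: the conjugation $f = F^{-1}\circ g\circ F$, the verification that $f$ fixes $A$ pointwise, and the observation that a digital isomorphism is an isometry for the shortest-path metrics are exactly what is needed, and this is the standard argument for this result. Note that the paper itself states Theorem~\ref{s-cold-invariant} as a quoted result from~\cite{BxFpSets2} without reproducing a proof, so there is nothing in the present text to diverge from; your argument matches the one used there (and mirrors the invariance proof for freezing sets), with the isometry step correctly identified as the only place where both directions of the isomorphism are needed.
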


\begin{definition}
\label{n-map}
{\rm \cite{BxLtd}}
Let $X$ be a $\kappa$-connected digital image. 
Let $f \in C(X,\kappa)$ 
and let $A \subset X$. Let $m,n \in \N^*$.
If for all $x \in A$ we have 
$d_{(X,\kappa)}(x,f(x)) \le n$, we say
$f|_A$ is an $n$-map. We write ``$f$ is an $n$-map" 
if $f|_X$ is an $n$-map.
\end{definition}

If for all $f \in C(X,\kappa)$, $f|_A$ being 
an $m$-map implies $f$ is an $n$-map, then $A$ is an
{\em $(m,n)$-limiting set for} $(X,\kappa)$
and $(X,\kappa)$ is {\em $(A,m,n)$-limited}.

\subsection{Cones and suspensions}
The paper~\cite{LuptonEtal}
introduces cone and suspension
constructions in digital
topology, according to the
following.

\begin{definition}
Let $(X,\kappa)$ be a 
digital image.
    \begin{itemize}
        \item The {\em cone on} $X$ is the set
        $CX = X \cup \{U\}$, where $U \not \in X$, 
        and adjacency $\kappa'$, where $U \adj_{\kappa'} x$ for
        all $x \in X$ and if $a,b \in X$ then
        $a \adj_{\kappa'} b$ if and only if
        $a \adj_{\kappa} b$.
    \item The {\em 
     suspension              
        on} $X$ is the set
        $SX = X \cup \{U, L\}$,
        where $U \neq L$ and
        $\{U, L\} \cap X = \emptyset$; 
        and adjacency $\kappa'$,
        where $U \adj_{\kappa'} x
        \adj_{\kappa'} L$ for
        all $x \in X$; $U$ and $L$ are not adjacent; and if
        $a,b \in X$ then
        $a \adj_{\kappa'} b$
        if and only if
        $a \adj_{\kappa} b$.
    \end{itemize}
\end{definition}
One may consider $U$ as the
``upper pole" of $CX$ and
of $SX$, and $L$ as
the ``lower pole" of $SX$.

When $\kappa$ can be
assumed, we often abbreviate
$(CX,\kappa')$ as $CX$, and
$(SX,\kappa')$ as $SX$.

For both $(CX,\kappa')$
and $(SX,\kappa')$, we
implicitly use the following.

\begin{thm}
    {\rm \cite{LuptonEtal}}
    Any finite graph 
    $(X,\kappa)$ can be 
    isomorphically mapped 
    onto a subset of
    $(Z^{n-1},c_{n-1})$, where $n = \#X$.
\end{thm}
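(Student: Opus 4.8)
The plan is to exhibit an explicit digital isomorphism onto its image. First I would record the decisive feature of the target adjacency: since we work in $\Z^{n-1}$ with $u = n-1$ equal to the ambient dimension, the clause ``for at most $u$ indices $|x_i - y_i| = 1$'' is vacuous, so $x \adj_{c_{n-1}} y$ holds precisely when $x \neq y$ and $|x_i - y_i| \le 1$ for \emph{every} coordinate $i$. Consequently two points fail to be $c_{n-1}$-adjacent exactly when some single coordinate separates them by at least $2$. The whole problem therefore reduces to finding an injection $\phi \colon X \to \Z^{n-1}$ such that every $\kappa$-edge maps to a pair agreeing to within $1$ in all coordinates, while every non-edge is separated by at least $2$ in some coordinate.

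Write $X = \{v_1, \dots, v_n\}$ and index the $n-1$ coordinates by $1, \dots, n-1$. I would define $\phi$ coordinatewise by
\[
\phi(v_k)_i = \begin{cases}
0 & \text{if } k = i, \\
1 & \text{if } k \neq i \text{ and } v_k \adj_{\kappa} v_i, \\
2 & \text{if } k \neq i \text{ and } v_k \not\adj_{\kappa} v_i,
\end{cases}
\]
for $1 \le i \le n-1$. The idea is that coordinate $i$ is a \emph{witness} attached to the vertex $v_i$: it places $v_i$ at height $0$, its $\kappa$-neighbors at height $1$, and its non-neighbors at height $2$, so that coordinate $i$ pushes every non-neighbor of $v_i$ a full distance $2$ away from $v_i$ while keeping genuine neighbors only distance $1$ away.

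To verify the construction I would check the two implications separately. For a non-edge $\{v_k, v_l\}$ with $k \neq l$, at least one index, say $k$, lies in $\{1, \dots, n-1\}$; then $\phi(v_k)_k = 0$ while $\phi(v_l)_k = 2$ (as $v_l \not\adj_{\kappa} v_k$), so the images differ by $2$ in coordinate $k$ and are not $c_{n-1}$-adjacent. For an edge $\{v_k, v_l\}$ I would show every coordinate $i$ yields a difference $\le 1$: if $i \notin \{k, l\}$ both values lie in $\{1, 2\}$; if $i = k$ then $\phi(v_k)_k = 0$ and $\phi(v_l)_k = 1$ (because $v_l \adj_{\kappa} v_k$), and symmetrically for $i = l$. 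In particular distinct vertices always receive distinct images, so $\phi$ is injective, and combining the two cases gives $v_k \adj_{\kappa} v_l \iff \phi(v_k) \adj_{c_{n-1}} \phi(v_l)$, i.e.\ $\phi$ is an isomorphism onto $\phi(X)$.

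The step I expect to carry the real content is edge-preservation, and specifically seeing why the intermediate value $1$ is indispensable. A naive placement using only $\{0, 2\}$ (a vertex at $0$ in its own coordinate, non-neighbors at $2$) correctly separates non-edges but destroys edges: if $v_k \adj_{\kappa} v_l$ yet some third vertex $v_i$ is adjacent to $v_k$ but not to $v_l$, then coordinate $i$ would read one endpoint at $0$ and the other at $2$ and spuriously separate the edge. Interposing the value $1$ for neighbors forces all vertices other than the witness $v_i$ to lie within the band $\{1, 2\}$ of width $1$ in coordinate $i$, which is exactly what prevents such spurious separations. The only bookkeeping point is the boundary case $k = n$, for which $v_n$ owns no coordinate; this causes no trouble because any two distinct indices in $\{1, \dots, n\}$ include at least one index $\le n-1$, so every non-edge still has an available witness coordinate.
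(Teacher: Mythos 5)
Your proof is correct, but there is nothing in the paper to compare it against: the statement carries the citation to the Lupton--Oprea--Scoville paper and is imported as a known result, so the paper contains no proof of its own. Judged on its own terms, your argument holds up. Your reading of $c_{n-1}$-adjacency in $\Z^{n-1}$ is exactly what the paper's definition yields (the ``at most $u$ indices'' clause is vacuous when $u$ equals the ambient dimension, leaving the condition $x \neq y$ and $|x_i - y_i| \le 1$ in every coordinate), and the witness-coordinate map $\phi$ does everything you claim: non-edges are separated by $2$ in the witness coordinate of whichever endpoint has an index at most $n-1$, edges stay within $1$ in every coordinate (the value $0$ only ever meets the value $1$ there, since the other endpoint of an edge is a $\kappa$-neighbor of the witness), and injectivity follows from either case. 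The handling of the unindexed vertex $v_n$ is also right, since any two distinct indices in $\{1,\ldots,n\}$ include one that is at most $n-1$. The only step you leave implicit is the passage from ``$\phi$ is an adjacency-preserving and adjacency-reflecting injection'' to ``$\phi$ is a digital isomorphism onto $\phi(X)$'' in the paper's sense (a continuous bijection with continuous inverse); by Theorem~\ref{continuityPreserveAdj} this is immediate---preservation gives continuity of $\phi$, reflection gives continuity of $\phi^{-1}$---but under the paper's definition of isomorphism it merits a sentence. Your construction is the natural adjacency-encoding embedding one would expect the cited source to use, so there is no methodological divergence to flag.
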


\section{Freezing and cold sets for $CX$ and $SX$}
\begin{remark}
\label{d(U,L)}
$d_{SX}(U,L)=2$.
\end{remark}

Given a graph $(X,\kappa)$ and
$D \subset X$, we say~$D$
is a {\em $\kappa$-dominating set} in~$X$ if
for every $x \in X$ there
exists $d \in D$ such that
$d \adjeq_{\kappa} x$.

\begin{prop}
\label{limitDominating}
    Let $(X,\kappa)$ be a connected digital image. 
    Let $f \in C(X,\kappa)$.
    Let $D \subset X$ such that $D$ is a $\kappa$-dominating
    set in $X$. If $f|_D$ is an $m$-map, then 
    $f$ is an $(m+2)$-map.
\end{prop}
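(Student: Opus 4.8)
The plan is to run a triangle-inequality argument in the shortest path metric $d = d_{(X,\kappa)}$, using a nearby point of the dominating set $D$ as an anchor. Before doing so I would record the auxiliary fact that every $\kappa$-continuous self-map is non-expansive with respect to $d$: if $d(a,b)=k$ and $a=y_0,y_1,\ldots,y_k=b$ is a shortest $\kappa$-path, then Theorem~\ref{continuityPreserveAdj} gives $f(y_i)\adjeq_\kappa f(y_{i+1})$ for each~$i$, so $f(y_0),\ldots,f(y_k)$ is a $\kappa$-path of length at most $k$ from $f(a)$ to $f(b)$; hence $d(f(a),f(b))\le d(a,b)$.

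With this in hand, I would fix an arbitrary $x\in X$ and use the dominating property of $D$ to produce a witness $y\in D$ with $d(x,y)\le 1$: either $x\in D$, in which case we take $y=x$, or $x$ is $\kappa$-adjacent to some $y\in D$. Because $y\in D$ and $f|_D$ is an $m$-map, we have $d(y,f(y))\le m$; and because $f$ is non-expansive and $d(x,y)\le 1$, we have $d(f(x),f(y))\le d(x,y)\le 1$.

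Combining these three estimates via the triangle inequality yields
\[
  d(x,f(x)) \le d(x,y) + d(y,f(y)) + d(f(y),f(x)) \le 1 + m + 1 = m+2.
\]
Since $x$ was arbitrary, $f$ is an $(m+2)$-map, as required.

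I do not expect a genuine obstacle here. The only step requiring a word of justification is the non-expansiveness of continuous maps, which is not isolated as a named result earlier in the paper but is an immediate consequence of Theorem~\ref{continuityPreserveAdj}; everything else amounts to unwinding the definition of a $\kappa$-dominating set and applying the triangle inequality once. If a sharper bound were sought, one might ask whether the additive constant $2$ can be improved, but the two unit contributions (one from the domination step, one from non-expansiveness) are exactly what the argument costs, so $m+2$ is the natural target.
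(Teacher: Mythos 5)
Your proof is correct and follows essentially the same route as the paper's: anchor each $x$ at a point of $D$ within distance~$1$, then apply the triangle inequality $d(x,f(x)) \le d(x,y) + d(y,f(y)) + d(f(y),f(x)) \le 1+m+1$. The only difference is cosmetic—you isolate and justify the non-expansiveness of continuous maps via Theorem~\ref{continuityPreserveAdj}, which the paper compresses into the phrase ``by continuity.''
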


\begin{proof}
Assume $D$ is a $\kappa$-dominating set in $X$.
    For each $x \in X$, there exists $\delta_x \in D$ such that
    $x \adjeq_{\kappa} \delta_x$. By continuity
    \begin{equation}
    \label{densityIneq}
        d_{(X,\kappa)}(x,f(x)) \le d(x, \delta_X) +
        d(\delta_X, f(\delta_X)) + 
         d(f(\delta_X), f(x))
    \end{equation}  
    \[ \le 1 + m + 1 = m+2. 
    \]
    The assertion follows.
\end{proof}

\begin{prop}
\label{distAtMost2}
Let $(X,\kappa)$ be a digital image.
Given $x,y \in CX$, $a,b \in SX$,
we have $d_{CX}(x,y) \le 2$, $d_{SX}(a,b) \le 2$.
\end{prop}

\begin{proof}
We must show that there are paths of length at most~2 from~$x$ to~$y$ in~$CX$
and from~$a$ to~$b$ in~$SX$.

Assume without loss of generality that $x \neq y$.
For $\{x,y\} \subset CX$, $\{x, U, y\}$ is a path of length at most 2.

In $SX$, for $\{a,b\} \subset X$,
$\{a, U, b\}$ is a path in $SX$ from $a$ 
to $b$ of length 2. 
For $a \in X$, $\{a,L\}$ is a path in $SX$
of length~1 from~$a$ to~$L$.
For $b \in X$, $\{U,b\}$ is a path in $SX$
of length~1 from~$U$ to~$b$.
For any $x \in X$,
$\{U,x,L\}$ is a path in $SX$ from $U$ to $L$
of length~2. The assertion follows.
\end{proof}

\begin{remark}
    In light of Proposition~\ref{distAtMost2}, 
    for every digital image $(X,\kappa)$, every
    nonempty $A \subset CX$,
    every nonempty $A' \subset SX$ and $m \ge 0$,
    $A$ is $(m,2)$-limiting for $CX$ and $A'$
    is $(m,2)$-limiting for $SX$. In this
    paper, for both $CX$
    and $SX$, we mostly study $(0,0)$-limiting  
    (freezing) and $(0,1)$-limiting (cold) sets.
\end{remark}

\begin{thm}
\label{coneFreezeThm}
    Let $(X,\kappa)$ be a digital image such that
    \begin{equation}
    \label{XnotSmall}
        \mbox{for every $x \in X$, $N(X,x,\kappa) \neq X$}
    \end{equation}
    Then $X$ is a minimal freezing set for $CX$.
\end{thm}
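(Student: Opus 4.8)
The plan is to establish the two assertions separately: first that $X$ is a freezing set for $CX$, and then that it is minimal.

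For the freezing property, I would take an arbitrary $g \in C(CX,\kappa')$ with $X \subseteq \Fix(g)$ and show that necessarily $g(U)=U$; since every point of $X$ is then already fixed, this forces $g=\id_{CX}$. Suppose toward a contradiction that $g(U)=x_0$ for some $x_0 \in X$. The pole $U$ is $\kappa'$-adjacent to every member of $X$, so continuity (Theorem~\ref{continuityPreserveAdj}) gives $x_0 = g(U) \adjeq_{\kappa'} g(x) = x$ for every $x \in X$. Because the adjacency $\kappa'$ restricted to $X$ coincides with $\kappa$, this says $x \adjeq_{\kappa} x_0$ for all $x \in X$, i.e.\ $N(X,x_0,\kappa)=X$, contradicting hypothesis~(\ref{XnotSmall}). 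Hence $g(U)=U$ and $X$ is a freezing set.

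For minimality it suffices to show that for each $x_1 \in X$ the set $X \setminus \{x_1\}$ is not a freezing set: any proper subset $A \subsetneq X$ omits some point $x_1$, so $A \subseteq X \setminus \{x_1\}$, and a non-identity self-map fixing $X \setminus \{x_1\}$ also fixes $A$, witnessing that $A$ is not freezing. The witness I would use is the collapse map $g_{x_1}\colon CX \to CX$ defined by $g_{x_1}(x_1)=U$ and $g_{x_1}(y)=y$ for every other $y \in CX$ (in particular $g_{x_1}(U)=U$). This is plainly not $\id_{CX}$, and it fixes all of $X \setminus \{x_1\}$.

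The step that needs care is verifying $g_{x_1} \in C(CX,\kappa')$, and this is where the structure of the cone does the work. I would check Theorem~\ref{continuityPreserveAdj} by cases: any adjacency between two points both fixed by $g_{x_1}$ is preserved trivially, while for any neighbor $z$ of $x_1$ in $CX$—whether $z \in X$ or $z=U$—the image $g_{x_1}(z)$ is adjacent-or-equal to $g_{x_1}(x_1)=U$, precisely because $U$ is $\kappa'$-adjacent to every point of $X$ and equal to itself. Thus continuity holds and minimality follows. The main conceptual point is the freezing argument: condition~(\ref{XnotSmall}) is exactly what prevents some point of $X$ from acting as a universal neighbor that could absorb $U$; the minimality construction is then routine, its only subtlety being the continuity check, which goes through smoothly for the same reason that $U$ is adjacent to all of $X$.
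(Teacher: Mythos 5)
Your proposal is correct and follows essentially the same route as the paper: both establish the freezing property by showing that condition~(\ref{XnotSmall}) forbids a continuous map fixing $X$ from sending $U$ into $X$, and both establish minimality via the collapse map that sends a single point $x_1 \in X$ to $U$ while fixing everything else. The only cosmetic difference is order of the two halves and that the paper states the minimality half in the slightly stronger form that $X$ lies in \emph{every} freezing set for $CX$, which your argument also yields implicitly.
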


\begin{proof}
    Since $U$ is adjacent to every point of~$X$,
    it follows from     Proposition~\ref{nbrPropertyForEssentialFreezePt} that
    \begin{equation} 
    \label{XinAllFreezers}
    X \mbox{ is a subset of every freezing set for } CX.
    \end{equation}

    The function $f: CX \to CX$ given, for some
    $y \in Y$, by
    \[ f(x) = \left \{ \begin{array}{ll}
       x & \mbox{if } x \in X, \\
       y & \mbox{if } x = U,
    \end{array} \right .
    \]
    cannot be continuous, by~(\ref{XnotSmall}). Therefore,
    if a function $g: CX \to CX$ is continuous and
    $g|_X = \id_X$, we     must have $g(U) = U$, so 
    $g = \id_{CX}$. Therefore, $X$ is a freezing set for $CX$. 
    It follows from~(\ref{XinAllFreezers}) that $X$ is minimal.   
\end{proof}

\begin{prop}
    \label{polesInFreezeSX}
    Let $B$ be a freezing set, a cold set.
    or a $(1,1)$-limiting set for $SX$. Then
    $\{ \, U, L \, \} \subset B$.
\end{prop}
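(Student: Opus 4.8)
The plan is to mimic the argument of Theorem~\ref{coneFreezeThm}: for each pole I will exhibit a continuous self-map of $SX$ that fixes every point except that pole and is not the identity. Any such map witnesses that the pole in question cannot be omitted from a freezing set, so both poles must lie in $A$.

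First I would show $U \in A$. Define $f \colon SX \to SX$ by $f(U) = L$ and $f(p) = p$ for every $p \neq U$. To verify $f \in C(SX,\kappa')$ I would use Theorem~\ref{continuityPreserveAdj}, checking that $p \adj_{\kappa'} p'$ implies $f(p) \adjeq_{\kappa'} f(p')$. All adjacencies among points of $X$, and between $L$ and points of $X$, are preserved because $f$ fixes those points. The only adjacencies affected are those involving $U$; since $U$ is adjacent exactly to the points $x \in X$ (recall $U \not\adj_{\kappa'} L$, consistent with $d_{CX}(U,L) = 2$ in Remark~\ref{d(U,L)}), it suffices to check that $f(U) = L \adjeq_{\kappa'} f(x) = x$ for each such $x$. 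This holds because $L \adj_{\kappa'} x$ for all $x \in X$ by the definition of the suspension. Hence $f$ is continuous, $f|_{SX \setminus \{U\}} = \id_{SX \setminus \{U\}}$, and $f(U) = L \neq U$, so $\Fix(f) = SX \setminus \{U\}$ and $f \neq \id_{SX}$. If a freezing set $A$ omitted $U$, then $A \subseteq \Fix(f)$, forcing $f = \id_{SX}$ by Definition~\ref{freezeDef}, a contradiction. Therefore $U \in A$.

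Second, I would invoke the symmetry of $SX$ under interchanging $U$ and $L$: both poles are adjacent to every point of $X$ and to no other point. Consequently the analogous map $g \colon SX \to SX$ with $g(L) = U$ and $g(p) = p$ for $p \neq L$ is continuous by the identical verification, fixes $SX \setminus \{L\}$, and is not the identity, so the same reasoning gives $L \in A$. Combining the two conclusions yields $\{\,U, L\,\} \subset A$.

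The step I expect to be the crux is the continuity check for $f$, and specifically the observations that $U$ and $L$ are \emph{not} adjacent (so that the pair $(U,L)$ imposes no constraint) and that $L$ inherits adjacency to every $x \in X$ (so that relocating $U$ onto $L$ preserves all of $U$'s former adjacencies). Once these two facts about the suspension's adjacency $\kappa'$ are pinned down, the remainder is routine bookkeeping parallel to the cone case.
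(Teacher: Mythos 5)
Your proposal is correct and follows essentially the same route as the paper: for each pole, exhibit the map that sends that pole to the opposite pole while fixing everything else, observe it is continuous and non-identity, and conclude the pole must belong to any freezing set. The only difference is that you spell out the continuity check the paper leaves as ``easily seen''; note that your claimed crux that $U \not\adj_{\kappa'} L$ is not actually needed there, since even if the poles were adjacent one would only need $f(U)=L \adjeq_{\kappa'} f(L)=L$, which holds trivially.
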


\begin{proof}
Let $\{y,y'\} = \{U,L\}$.
    If $y \not \in B$, then the function $f: SX \to SX$ given by
    \[ f(x) = \left \{ \begin{array}{ll}
       x  & \mbox{if } x \neq y, \\
       y' &  \mbox{if } x = y
    \end{array}
    \right .
    \]
    is easily seen to be continuous and satisfies $f|_B = \id_B$. However, 
    $d(y,f(y)) = 2$, contrary to our
    assumption about~$B$. Thus we must have $y,y' \in B$.
\end{proof}

\begin{lem}
    \label{polesToPolesKeepX}
    Let $f \in C(SX,\kappa')$ such that
    $f|_{\{ \, U, L \, \}} = \id_{\{ \, U, L \, \}}$.
    Then $f(X) \subset X$.
\end{lem}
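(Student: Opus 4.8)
The plan is to rule out the only two ways $f$ could fail to satisfy $f(X) \subset X$: some point of $X$ being sent to $U$, or being sent to $L$. The whole argument rests on the observation, recorded in Remark~\ref{d(U,L)}, that $d_{SX}(U,L)=2$, so that $U$ and $L$ are \emph{not} $\kappa'$-adjacent. This is the fact I expect to be the crux, since without it the contradictions below collapse; everything else is a direct application of adjacency-preservation.

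First I would fix an arbitrary $x \in X$ and recall that, by the definition of the suspension, $x \adj_{\kappa'} U$ and $x \adj_{\kappa'} L$. Applying Theorem~\ref{continuityPreserveAdj} to the continuous map $f$, together with the hypotheses $f(U)=U$ and $f(L)=L$, yields $f(x) \adjeq_{\kappa'} U$ and $f(x) \adjeq_{\kappa'} L$.

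Next I would derive the contradiction. Suppose $f(x)=U$. Then from $f(x) \adjeq_{\kappa'} L$ I obtain $U \adjeq_{\kappa'} L$, and since $U \neq L$ this forces $U \adj_{\kappa'} L$, contradicting the non-adjacency of the poles. Symmetrically, if $f(x)=L$, then $f(x) \adjeq_{\kappa'} U$ gives $L \adj_{\kappa'} U$, the same contradiction. Hence $f(x)$ is neither $U$ nor $L$, so $f(x) \in X$. As $x \in X$ was arbitrary, $f(X) \subset X$.

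The proof is short precisely because it uses only adjacency-preservation and the single structural fact that the two poles lie at distance~$2$; no property of the internal adjacency $\kappa$ on $X$ is required. The one spot demanding care is the step passing from $\adjeq_{\kappa'}$ to $\adj_{\kappa'}$, where I must explicitly invoke $U \neq L$, which is guaranteed by the definition of the suspension.
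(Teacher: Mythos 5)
Your proof is correct and follows essentially the same route as the paper's: both use the fact that each $x \in X$ is adjacent to both poles, apply adjacency-preservation (Theorem~\ref{continuityPreserveAdj}) with $f(U)=U$ and $f(L)=L$, and derive a contradiction from the fact that $U$ and $L$ are neither equal nor adjacent. The only cosmetic difference is that you justify the poles' non-adjacency via Remark~\ref{d(U,L)}, while the paper asserts it directly from the suspension's definition.
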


\begin{proof}
    Suppose otherwise. Then for 
some $x \in X$, either $f(x) = U$ or $f(x)=L$. In the former
case, we have $x \adj L$; but $f(x) = U$ and
$f(L)=L$ are neither adjacent nor equal, contrary to
the continuity of~$f$. In the latter case, $x \adj U$;
but $f(x) = L$ and $f(U)=U$ are neither
adjacent nor equal, contrary to the continuity of~$f$.
These contradictions establish the assertion.
\end{proof}

\begin{thm}
    Let $A \subset (X,\kappa)$ and
    $B = A \cup \{ \, U, L \, \} \subset SX$.
    Then $A$ is a (minimal) 
    freezing set for $(X,\kappa)$ if
    and only if $B$ is a (minimal) freezing set for $SX$.
\end{thm}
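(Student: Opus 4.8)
The plan is to establish the plain biconditional first and then bootstrap the minimality statement from it, relying on the two structural facts already proved: Lemma~\ref{polesToPolesKeepX} (a continuous self-map of $SX$ fixing both poles carries $X$ into $X$) and Proposition~\ref{polesInFreezeSX} (every freezing set for $SX$ contains $U$ and $L$). Throughout I would use that the suspension adjacency $\kappa'$ agrees with $\kappa$ on pairs of points of $X$, and that $A \subset X$ is disjoint from $\{\,U,L\,\}$.

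For the direction ``$A$ freezing for $(X,\kappa)$ $\Rightarrow$ $B$ freezing for $SX$,'' I would take an arbitrary $g \in C(SX)$ with $B \subset \Fix(g)$ and aim to conclude $g = \id_{SX}$. Since $B \supset \{\,U,L\,\}$, the map $g$ fixes both poles, so Lemma~\ref{polesToPolesKeepX} gives $g(X) \subset X$. The restriction $g|_X$ is then a self-map of $X$, and it is $\kappa$-continuous: if $x \adj_\kappa x'$ in $X$ then $x \adj_{\kappa'} x'$, whence $g(x) \adjeq_{\kappa'} g(x')$ by Theorem~\ref{continuityPreserveAdj}, and as both images lie in $X$ this reads $g(x) \adjeq_\kappa g(x')$. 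Because $A \subset B \subset \Fix(g)$ with $A \subset X$, we have $A \subset \Fix(g|_X)$, so the freezing property of $A$ forces $g|_X = \id_X$. Combined with $g(U)=U$ and $g(L)=L$, this yields $g = \id_{SX}$.

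For the converse direction, given $h \in C(X,\kappa)$ with $A \subset \Fix(h)$, I would extend $h$ to $\tilde h : SX \to SX$ by $\tilde h|_X = h$, $\tilde h(U) = U$, $\tilde h(L) = L$, and verify $\tilde h \in C(SX)$ via Theorem~\ref{continuityPreserveAdj}. The only adjacencies to check beyond those internal to $X$ are the pole adjacencies $U \adj_{\kappa'} x$ and $x \adj_{\kappa'} L$ for $x \in X$: since $\tilde h$ fixes each pole and sends $x$ to $h(x) \in X$, and each pole is $\kappa'$-adjacent to every point of $X$, adjacency is preserved. Then $B = A \cup \{\,U,L\,\} \subset \Fix(\tilde h)$, so the freezing property of $B$ gives $\tilde h = \id_{SX}$ and hence $h = \tilde h|_X = \id_X$; thus $A$ is freezing for $(X,\kappa)$.

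Finally, for minimality I would transfer proper subsets across the correspondence $A' \mapsto A' \cup \{\,U,L\,\}$. If $A$ is minimal but some $B' \subsetneq B$ were freezing for $SX$, then Proposition~\ref{polesInFreezeSX} forces $\{\,U,L\,\} \subset B'$, so $A' := B' \cap X$ satisfies $A' \subsetneq A$ and $B' = A' \cup \{\,U,L\,\}$; the converse direction above makes $A'$ freezing for $X$, contradicting minimality of $A$. Conversely, if $B$ is minimal but some $A' \subsetneq A$ were freezing for $X$, the forward direction makes $A' \cup \{\,U,L\,\} \subsetneq B$ freezing for $SX$, contradicting minimality of $B$. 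The step demanding the most care is the forward direction's reduction from $SX$ to $X$: one must invoke Lemma~\ref{polesToPolesKeepX} to guarantee $g(X) \subset X$ before the restriction $g|_X$ is even well defined, while Proposition~\ref{polesInFreezeSX} is what makes the minimality bookkeeping go through, since it guarantees the poles are never the part of a freezing set that gets deleted.
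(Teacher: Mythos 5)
Your proof is correct and takes essentially the same route as the paper: the forward direction via Lemma~\ref{polesToPolesKeepX} to restrict to a self-map of $X$, the converse via extending a map on $X$ by the identity on the poles, and minimality via Proposition~\ref{polesInFreezeSX}. The only difference is that you spell out the minimality bookkeeping explicitly, which the paper leaves as a one-line remark.
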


\begin{proof}
Suppose $A$ is a freezing set for $(X,\kappa)$.
Let $f: SX \to SX$ be continuous, such that $f|_B = \id|_B$. By Lemma~\ref{polesToPolesKeepX}, $f|_X: X \to X$; and
$f|_A = \id_A$. By choice of $A$, $f|_X = \id_X$. Thus
$f = \id_{SX}$, so $B$ is freezing.

Suppose $A$ is a minimal freezing set for $(X,\kappa)$. Suppose for a contradiction
that~$B$ is not minimal. Then there exists
a proper subset~$B'$ of~$B$ such that $B'$ is a freezing set for $(SX,\kappa')$.
Therefore
\begin{equation} 
\label{ifBnotMinimal}
U \not \in B' \mbox{ or } L \not \in B'
   \mbox{ or there exists } a \in A \setminus B'. 
\end{equation}
But by Proposition~\ref{polesInFreezeSX},
$\{U,L\} \subset B'$. Thus the only
remaining possibility 
from~\ref{ifBnotMinimal} is that there 
exists $a \in A \setminus B'$. But then
we see easily that the function
$g: SX \to SX$ given by
\[ g(x) = \left \{
\begin{array}{ll}
   x & \mbox{if } x \neq a; \\
   U  & \mbox{if } x = a,
\end{array}
\right .
\]
satisfies $g \in C(SX, \kappa')$ and $g|_{B'} = \id_{B'}$. But $g \neq \id_{SX}$,
contrary to the assumption that $B'$ is 
freezing. We must therefore have that~$B$
is a minimal freezing set for $(SX,\kappa')$.

Let $B$ be a freezing set for $SX$. By 
Proposition~(\ref{polesInFreezeSX}), 
\[
    \{ \, U, L \, \} \subset B
\]
Let $f \in C(X,\kappa)$
such that $f|_A = \id_A$.
Then the function 
$g: SX \to SX$ given by
\[ g(y) = \left \{ \begin{array}{ll}
    f(y) & \mbox{if } y \in X, \\
    y & \mbox{if } y \in
        \{U,L\},
\end{array}
\right .
\]
is easily seen to belong to
$C(SX,\kappa')$, with $g|_B = \id_B$.
By choice of $B$, 
$g = \id_{SX}$. Therefore,
$f = g|_X = \id_X$.
Thus $A$ is a freezing set
for $(X,\kappa)$.

Suppose $B$ is a minimal freezing set
for~$(SX,\kappa')$.
If a subset~$A'$ of $A$ is also a freezing set for~$X$, by the above and
Proposition~\ref{polesInFreezeSX}, 
\[ B' = A' \cup \{U,L\} \subset A \cup \{U,L\} = B
\]
is a freezing set
for~$CX$. Since $B' \subset B$ and~$B$ is minimal for~$CX$,
we must have $B'=B$, hence $A'=A$. Hence $A$ is minimal for~$X$.
\end{proof}

\begin{prop}
    Let $(X,\kappa)$ be a connected digital image. 
    Let $Y$ be a cold set or a
    $(1,1)$-limiting set for $(X,\kappa)$.
    Let $f \in C(CX,\kappa')$ such that $f|_Y=\id_Y$ (respectively, $f|_Y$ is a $1$-map)
    and $f(X) \subset X$. Then~$f$ is a $1$-map.
\end{prop}

\begin{proof}
 It follows immediately in both cases
 that $f|_Y: Y \to X$ is a $1$-map,
 and therefore $f|_X$ is a $1$-map. Since 
 $d(U,f(U)) \le 1$, the assertion follows.
\end{proof}

\begin{thm}
    Let $(X,\kappa)$ be a digital image. Then 
    $U$ is not a member of a
minimal (1,1)-limiting subset $A$ of $CX$.
\end{thm}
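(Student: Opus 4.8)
The plan is to argue by contradiction through a minimality reduction: assuming $A$ is a minimal $(1,1)$-limiting subset of $CX$ with $U \in A$, I will show that $A' = A \setminus \{U\}$ is itself $(1,1)$-limiting, contradicting the minimality of $A$ (in the sense of Definition~\ref{n-map}).

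First I would record the crucial structural fact about the apex. By the definition of the cone, $U \adj_{\kappa'} x$ for every $x \in X$; consequently $d_{CX}(U,y) \le 1$ for every $y \in CX$, since the distance is $0$ when $y = U$ and exactly $1$ when $y \in X$. Hence, for \emph{any} $f \in C(CX)$ whatsoever, $d_{CX}(U, f(U)) \le 1$, so the restriction $f|_{\{U\}}$ is automatically a $1$-map, independently of $f$. In other words, the point $U$ contributes no nontrivial constraint to the hypothesis that $f|_A$ is a $1$-map; this universal adjacency is the heart of the argument.

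Next, to verify that $A'$ is $(1,1)$-limiting, I would take an arbitrary $f \in C(CX)$ for which $f|_{A'}$ is a $1$-map. Since $A = A' \cup \{U\}$ and $f|_{\{U\}}$ is a $1$-map by the observation above, it follows that $f|_A$ is a $1$-map. Because $A$ is $(1,1)$-limiting, $f$ must then be a $1$-map. As $f$ was arbitrary, $A'$ is $(1,1)$-limiting. Since $U \in A$ but $U \notin A'$, we have $A' \subsetneq A$, contradicting the minimality of $A$. This reduction remains valid even in the degenerate case $A' = \emptyset$, where the hypothesis that $f|_{A'}$ is a $1$-map holds vacuously.

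I do not anticipate a genuine obstacle here: the entire content is the observation that the universally adjacent apex $U$ can never be displaced by more than distance $1$, so it is redundant in any $1$-map hypothesis. The only points requiring care are bookkeeping, namely confirming that ``$f|_{A'}$ a $1$-map'' together with the free condition at $U$ really does yield ``$f|_A$ a $1$-map,'' and checking that the reduction still produces a proper subset, hence a contradiction, in the edge case $A = \{U\}$ (so $A' = \emptyset$).
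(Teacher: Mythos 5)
Your proof is correct and follows essentially the same route as the paper: remove $U$ from $A$, observe that $f|_{\{U\}}$ is automatically a $1$-map since $U$ is adjacent to every point of $CX$, conclude $A\setminus\{U\}$ is still $(1,1)$-limiting, and contradict minimality. Your explicit justification of why $d_{CX}(U,f(U))\le 1$ (which the paper leaves implicit) and your handling of the edge case $A=\{U\}$ are welcome additions but do not change the argument.
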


\begin{proof}
    Suppose $U \in A$ where $A$ is (1,1)-limiting for $CX$.
    Let $A' = A \setminus \{U\}$. Let $f \in C(CX,\kappa')$ such that
    $f|_{A'}$ is a 1-map. Since $f|_{\{U\}}$ must be a 1-map,
    it follows that $f|_A$ is a 1-map, so by choice of $A$,
    $f$ is a 1-map. Thus $A'$ is (1,1)-limiting for $CX$;
    therefore $A$ is not minimal.
\end{proof}

\begin{thm}
    Let $(X,\kappa)$ be a digital image with
    property~(\ref{XnotSmall}). Then $U$ and $L$ are members of
    every $A \subset SX$ such that $SX$ is $(A,1,1)$-limited.
\end{thm}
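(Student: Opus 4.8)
The plan is to argue by contradiction, mimicking the pole-swapping counterexamples used in Proposition~\ref{polesInFreezeSX}: if one of the poles is omitted from $A$, then there is a continuous self-map of $SX$ that fixes $A$ pointwise (hence restricts to a $1$-map on $A$) yet displaces the omitted pole by the maximum possible distance, so that it fails to be a $1$-map. This contradicts $(A,1,1)$-limitedness.

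First I would record the key metric fact that $d_{SX}(U,L)=2$. Indeed, $U\neq L$, and by the definition of the suspension $U$ and $L$ are not $\kappa'$-adjacent (the only declared adjacencies of the poles are $U\adj x\adj L$ for $x\in X$), so $d_{SX}(U,L)\geq 2$; and $d_{SX}(U,L)\leq 2$ by Corollary~\ref{distAtMost2}. Hence $d_{SX}(U,L)=2$, using that $X\neq\emptyset$ so that $SX$ is connected.

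Next, suppose toward a contradiction that $U\notin A$, and define $f:SX\to SX$ by $f(U)=L$ and $f(z)=z$ for $z\neq U$. To see $f\in C(SX)$, by Theorem~\ref{continuityPreserveAdj} it suffices to check that $a\adj_{\kappa'} b$ implies $f(a)\adjeq_{\kappa'} f(b)$, and the only adjacencies affected are those involving $U$. The $\kappa'$-neighbors of $U$ are exactly the points of $X$; for each such $x$ we have $f(U)=L$ and $f(x)=x$ with $x\adj_{\kappa'} L$, so the required condition holds, while every adjacency not involving $U$ is preserved verbatim. Thus $f$ is continuous. Since $U\notin A$, we have $f|_A=\id_A$, which is a $1$-map; but $d_{SX}(U,f(U))=d_{SX}(U,L)=2>1$, so $f$ is not a $1$-map. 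This contradicts the hypothesis that $SX$ is $(A,1,1)$-limited, forcing $U\in A$. The symmetric map $g$ with $g(L)=U$ and $g(z)=z$ otherwise handles the case $L\notin A$ identically (the $\kappa'$-neighbors of $L$ are again exactly the points of $X$, each $\kappa'$-adjacent to $U$), giving $L\in A$.

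Finally, a remark on difficulty: there is no real obstacle here beyond the routine continuity check for $f$, and the crux is simply that the poles form the unique pair of points realizing the diameter $2$ of $SX$, so that sending a pole to the opposite pole is the canonical way to break the $1$-map property while leaving $X$, and hence any pole-free $A$, fixed. I would note in passing that property~(\ref{XnotSmall}) does not actually appear to be invoked in this argument; only $X\neq\emptyset$ is needed, in order to guarantee $d_{SX}(U,L)=2$.
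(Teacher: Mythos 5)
Your proof is correct and is essentially the paper's own argument: the paper likewise argues by contradiction, using the map that sends the omitted pole to the opposite pole while fixing everything else, and invoking $d_{SX}(U,L)=2$ (the paper's Remark~\ref{d(U,L)}) to contradict $(A,1,1)$-limitedness. Your side observation is also accurate: the paper's proof never invokes property~(\ref{XnotSmall}) either, needing only $X\neq\emptyset$ so that $SX$ is connected and the poles lie at distance~$2$.
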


\begin{proof}
 Let $A \subset X$ be such that 
 \begin{equation}  
    \label{SX-A,1,1}
     SX \mbox{ is } (A,1,1)-\mbox{limited.}
 \end{equation}
 We argue by contradiction. Suppose $U \not \in A$. 
 Let $f: SX \to SX$ be the function
\[ f(x) = \left \{ \begin{array}{ll}
    x & \mbox{if } x \neq U, \\
    L &  \mbox{if } x = U.
\end{array} \right .
\]
It is easily seen that $f \in C(SX,\kappa')$ and $f|_A$ is a 1-map,
but by Remark~\ref{d(U,L)}, $d_{SX}(U, f(U)) = 2$, contrary 
to~(\ref{SX-A,1,1}). Thus we must have $U \in A$.

A similar argument shows we must have $L \in A$.
\end{proof}

\section{Limiting sets for digital
pyramids}
In this section, we consider 
freezing and cold sets for other
cone-like digital images. In 
particular, for $n \in \N$, we
consider a digital version of the
surface of a pyramid 
(see Figures~\ref{fig:pyramid}
and~\ref{fig:T_1andT_2})
\[ P_n = \bigcup_{i=0}^n T_i 
  \subset \Z^3
\]
where 
 \[ T_i = ( \{ \, -i,i \, \} \times [-i,i]_{\Z}  \times \{n-i\})
    \cup ([-i,i]_{\Z}  \times \{ \, -i,i \, \}  \times \{n-i\}).
    \]
    Also, let $T_i'$ be the set of ``corner" points
    of~$T_i$:
\[ T_i' = \{ \, (-i,-i,n-i), (i,-i,n-i),  (i,i,n-i), (-i,i,n-i) \, \}
   \subset T_i,
    \]
    \[W_i = [-i,i]_{\Z}^2 
    \times  \{n-i\}.
    \]
    Figure~\ref{fig:T_1andT_2} illustrates that in
    the $c_3$ adjacency, each $T_i$ is a 
    digital square (without interior)
    whose ``corners" are the members
    of $T_i'$; the two $c_1$-neighbors in $T_i$ of
    each ``corner" are $c_2$-adjacent.
$P_n$ can be thought of as a digital analog of 
a cone on a square (without interior).

Let
\begin{equation}
    \label{solidPyramidDef}
    Q_n = \bigcup_{i=0}^n W_i \subset \Z^3,
\end{equation}
\[ 
\]
which can be thought of as a 
digital analog of a solid 
tetrahedron.

For each of $P_n,Q_n$, we let
$U = (0,0,n)$, the high point of the image.

\begin{figure}
    \centering
    \includegraphics[height=2in]{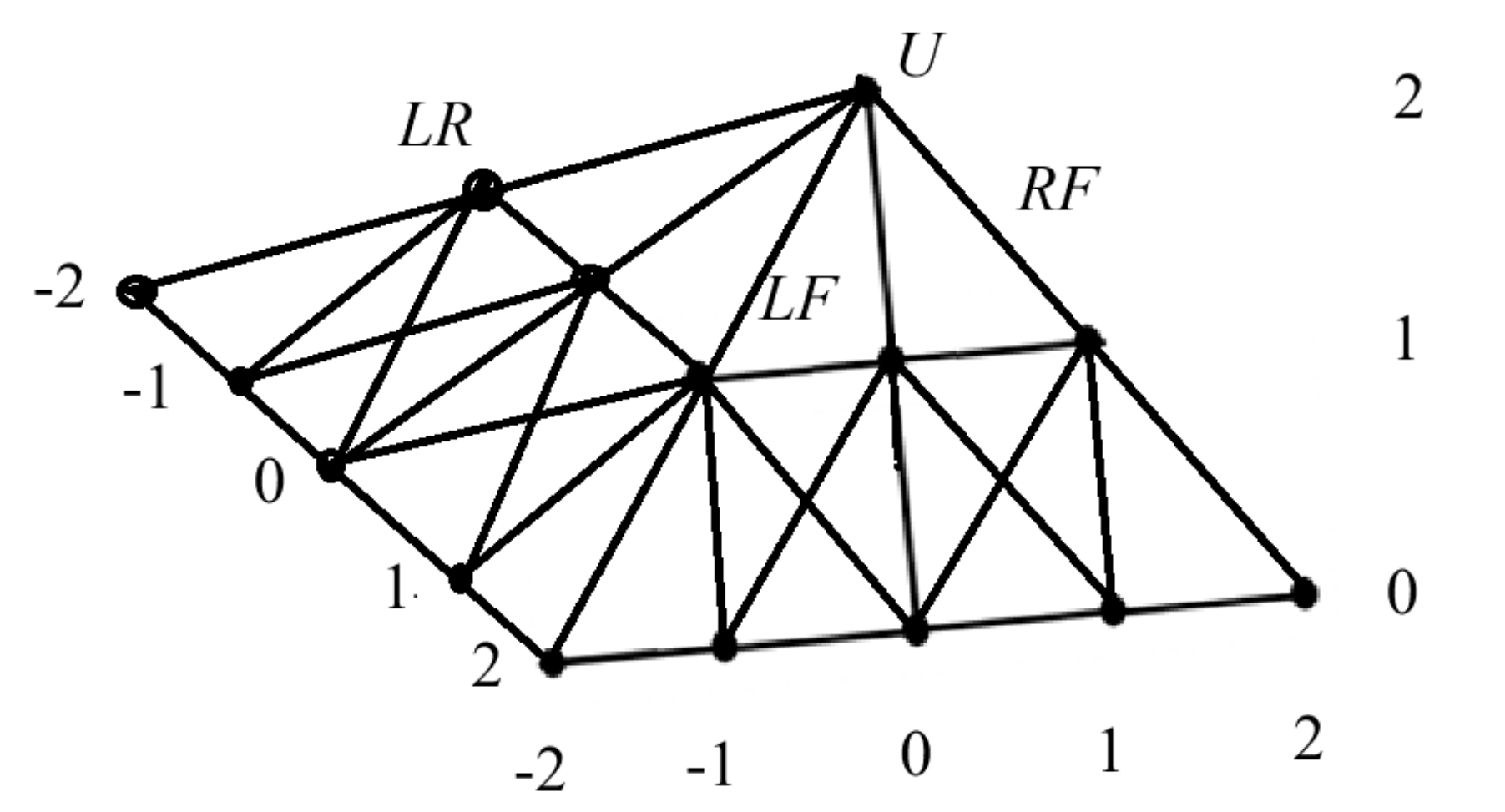}
    \caption{Left and front 
    sides of the digital 
    pyramid $(P_2,c_3)$
    with base
    $[-2,2]_{\Z}^2$. \newline
    Marked edges: \newline
    {\em LR} (left rear) from (-2,-2,0) to {\em U} \newline
    {\em LF} (left front) from (2,-2,0) to {\em U} \newline
    {\em RF} (right front) from (2,2,0) to {\em U} \newline
    Faces shown: \newline
    {\em L} (left) - the digital triangle with vertices $U$,
                     $(-2,-2,0)$, and $(2,-2,0)$ \newline
    {\em F} (front) - the digital triangle with vertices $U$,
                     $(2,-2,0)$, and $(2,2,0)$
    }
    \label{fig:pyramid}
\end{figure}

We will refer to the edges of the pyramid containing $U$ as follows.
\newline $LR$ (left rear): the digital segment from $(-n,-n,0)$ to $U$
\newline $LF$ (left front): the digital segment from $(n,-n,0)$ to $U$
\newline $RF$ (right front): the digital segment from $(n,n,0)$ to $U$
\newline $RR$ (right rear): the digital segment from $(-n,n,0)$ to $U$
\newline $BL$ (bottom left): the digital segment from $(-n,-n,0)$ to
$(n,-n,0)$
\newline $BF$ (bottom front): the digital segment from $(n,-n,0)$ to
$(n,n,0)$
\newline $BR$ (bottom right): the digital segment from $(n,n,0)$ to
$(-n,n,0)$
\newline $BB$ (bottom back): the digital segment from $(-n,n,0)$ to
$(-n,-n,0)$ \newline

 Faces of the pyramid are the digital triangles as follows:
\newline $L$ (left): edges are $BL, LR, LF$
\newline $F$ (front): edges are $BF, LF, RF$
\newline $R$ (right): edges are $BR, RF, RB$
\newline $B$ (back): edges are $BB, RB, LB$

\begin{figure}
    \includegraphics[height=2in]{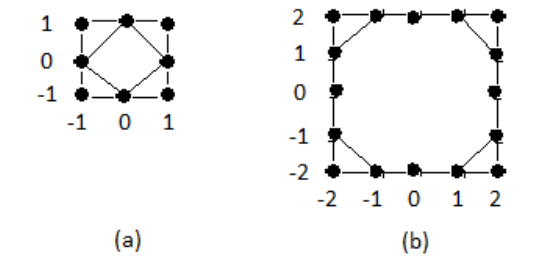}
    \caption{Slices of $(P_n,c_3)$ \newline
    (a) an isomorphic copy in $(\Z^2,c_2)$ of $(T_1,c_3)$ \newline 
          (b)  an isomorphic copy in $(\Z^2,c_2)$ of $(T_2,c_3)$
          }
    \label{fig:T_1andT_2}
\end{figure}

\subsection{Freezing sets for $P_n$}
\label{FreezeP_nSection}
It is easily seen (although perhaps easily overlooked) that
$(P_n,c_1)$ is not connected,
as each $T_i$ is a 
$c_1$-component. Therefore, we consider $(P_n,c_3)$.

We use the following.

\begin{lem}
    \label{pyramidLikeSquare}
$(P_n,c_3)$ is isomorphic to $[-n,n]_{\Z}^2$.
\end{lem}

\begin{proof}
        The function $F: P_n \to 
[-n,n]_{\Z}^2$ given by
$F(a,b,c) = (a,b)$ is a
$(c_3,c_2)$-isomorphism. This
can be seen by imagining 
looking down on $P_n$ from
above~$U$. Note
\[ F(T_n) =
([-n,n]_{\Z} \times \{-n,n \}) \cup (\{-n,n \} \times [-n,n]_{\Z}) = 
Bd([-n,n]_{\Z}^2).
\]
\end{proof}

\begin{thm}
\label{uniqueRectangleFreezer}
    Let $X = [a_1,b_1]_{\Z} \times [a_2,b_2]_{\Z}$. Then $Bd(X)$ is the unique 
    minimal freezing set for $(X,c_2)$.
\end{thm}

\begin{proof}
This follows from Theorem~\ref{noProperSub}.
\end{proof}

\begin{thm}
\label{P_nMinFreeze}
    $T_n$ is the unique minimal freezing set for $P_n$.
\end{thm}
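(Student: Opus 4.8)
The plan is to reduce the theorem to the single assertion that $T_n$ is itself a freezing set for $(P_n,c_3)$. Proposition~\ref{T_nInP_nFreezer} already shows that $T_n \subset A$ for every freezing set $A$; hence, once $T_n$ is known to be a freezing set, it is contained in every freezing set, so it is the unique minimal one (any other freezing set properly contains it and is therefore non-minimal). Thus I would fix an arbitrary $f \in C(P_n,c_3)$ with $f|_{T_n}=\id_{T_n}$ and prove $f=\id_{P_n}$, working upward from the base ring to the apex in three stages.

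First I would pin the apex $U=(0,0,n)$. Continuity makes $f$ distance-non-increasing for the shortest-path metric, and the four base corners $(\pm n,\pm n,0)$ lie in $T_n$, so they are fixed; hence $d_{P_n}(f(U),c)\le d_{P_n}(U,c)=n$ for each base corner $c$. Using that $d_{P_n}$ is at least the ambient $c_3$ (Chebyshev) distance in $\Z^3$, these four inequalities force the first two coordinates of $f(U)$ to be zero, and the only point of $P_n$ with $x=y=0$ is $U$, so $f(U)=U$. Next I would fix the four lateral edges $LR,LF,RF,RR$: for each, the base corner and $U$ are now fixed, and the segment $(k,-k,n-k)$ (for $LF$, say) is the unique shortest $c_3$-path in $P_n$ between them, because a length-$n$ path must change each coordinate monotonically by exactly $1$ at every step. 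Proposition~\ref{uniqueShortestProp} then fixes every edge, and hence every corner point $T_i'$.

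The main work, and the chief obstacle, is the faces, where one cannot reuse Proposition~\ref{uniqueShortestProp}: within a single ring $T_i$ the front-edge arc from one corner to the other is \emph{not} the unique shortest path, since the detour up one lateral edge to $U$ and down the next also has length $2i$. Instead I would exploit Lemma~\ref{c1pulling}. Fix an interior front-face point $p=(i,b,n-i)$ with $-i<b<i$. Along the ``downward'' chain $p^{(k)}=(i+k,b,n-i-k)$ from $p$ to the fixed base point $(n,b,0)$, consecutive $c_3$-adjacent points have strictly increasing $p_1$ and strictly decreasing $p_3$, so iterating Lemma~\ref{c1pulling} shows that $p_1(f(p))<i$ or $p_3(f(p))>n-i$ would propagate to a contradiction at the fixed base; this yields $p_1(f(p))\ge i$ and $p_3(f(p))\le n-i$. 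Along a suitable ``upward'' chain inside the front face from $p$ to $U$ (reducing $|y|$ in step with $x$ so as to remain in $P_n$), the reversed monotonicities give $p_1(f(p))\le i$ and $p_3(f(p))\ge n-i$. Hence $p_1(f(p))=i$ and $p_3(f(p))=n-i$, so $f(p)$ lies on the front edge of $T_i$. Finally, the two corners of $T_i$ on that edge are fixed, and the edge is a monotone-$p_2$ chain joining them, so one further application of Lemma~\ref{c1pulling} in the $p_2$ coordinate pins $p_2(f(p))=b$, giving $f(p)=p$.

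Carrying out the same chains for the remaining three faces, or invoking the dihedral symmetries of the square base (which are isomorphisms of $P_n$ fixing $T_n$, $U$, and the lateral edges), fixes all of $P_n$, so $f=\id_{P_n}$ and $T_n$ is a freezing set. The delicate points to verify carefully are that each ``upward'' face chain stays inside $P_n$ and is $c_3$-connected, and the bookkeeping of which clause of Lemma~\ref{c1pulling} (the increasing or the decreasing case) applies to each monotone chain; the conceptual crux is recognizing that the non-uniqueness of geodesics around a ring forces the switch from Proposition~\ref{uniqueShortestProp} to the pulling lemma.
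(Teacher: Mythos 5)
Your proof is correct, but it takes a genuinely different route from the paper's. The paper settles the theorem in a few lines: the vertical projection $F(a,b,c)=(a,b)$ is a $(c_3,c_2)$-isomorphism of $P_n$ onto the solid square $[-n,n]_{\Z}^2$ carrying $T_n$ onto $Bd([-n,n]_{\Z}^2)$, so Theorem~\ref{freezeInvariant} (isomorphism invariance of freezing sets) and Theorem~\ref{noProperSub} (the boundary of a digital box is a minimal freezing set under $c_n$) immediately give that $T_n$ is a minimal freezing set, with uniqueness from Proposition~\ref{T_nInP_nFreezer} exactly as in your opening reduction. You never notice this isomorphism; instead you prove directly that $T_n$ freezes: pinning $U$ by metric non-expansiveness of continuous maps against the fixed base corners, pinning the lateral edges via Proposition~\ref{uniqueShortestProp} (your unique-geodesic claim is right, since a length-$n$ $c_3$-path from a base corner to $U$ must change every coordinate monotonically by $1$ per step), and pinning face interiors by iterating Lemma~\ref{c1pulling} along monotone chains; I checked the downward chain $(i+k,b,n-i-k)$, the clamped upward chain, and the final $p_2$-pinning between the two fixed corners of $T_i$, and each step invokes the correct clause of the lemma. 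This is essentially the machinery the paper reserves for the solid pyramid $Q_n$ (Theorem~\ref{minFreezeForQ_n}), where no collapsing isomorphism is available. The paper's route buys brevity and the structural insight that $(P_n,c_3)$ is just a disguised copy of $([-n,n]_{\Z}^2,c_2)$; your route buys self-containedness and a template that transfers to images which do not project onto an already-solved case, and your observation that geodesics around a ring $T_i$ are non-unique --- so Proposition~\ref{uniqueShortestProp} cannot fix the face interiors and the pulling lemma is forced --- correctly identifies the crux of the direct approach.
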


\begin{proof}
This follows from 
Theorem~\ref{freezeInvariant},
Theorem~\ref{noProperSub}, and
Theorem~\ref{uniqueRectangleFreezer}.
\end{proof}

\subsection{Freezing sets for digital
solid pyramids}
In this section, we consider
digital images $(Q_n,c_3)$, where 
$Q_n$ is as defined 
at~(\ref{solidPyramidDef}).
Notice $Q_n$ is a cone-like structure
over a base of the (solid) digital
square $W_n$; $Q_n$ may be regarded as
a solid version of $P_n$.

As above, we let $U$ be the 
highest point of $Q_n$:
$U = (0,0,n)$.

We have the following.

\begin{lem}
\label{W_nInFreezingSet}
    $\{U\} \cup W_n$ is a subset of every
    freezing set for $(Q_n,c_3)$.
\end{lem}

\begin{proof}
From Proposition~\ref{nbrPropertyForEssentialFreezePt}, members
listed under the following ``for~$y$" column belong to
every freezing set of $(Q_n,c_3)$.
\begin{tabbing}
     \underline{for $y$}~~~~~~~~~~~~~~~~~~~~~~~~~~~~~~~~~~~\=
        \underline{and for $y'$} \\
        $U = (0,0,n)$ \> $(0,0,n-1)$ \\
         Members of $T_i'$: \\ 
        $(-n,-n,0)$ \> $(-(n-1),-(n-1),0)$ \\
        $(n, -n, 0)$ \> $(n-1, -(n-1), 0)$ \\
        $(n,n,0)$ \> $(n-1,n-1,0)$ \\
        $(-n,n,0)$ \> $(-(n-1), n-1, 0)$ \\
         ``Interiors" of ``edges" of $W_n$: \\
        $(a,-n,0) \in L$, $|a| < n$ \> $(a,-(n-1),0)$ \\
        $(n,b,0) \in F$, $|b| < n$ \> $(n-1,b,0)$ \\
        $(a,n,0) \in R$,  $|a| < n$ \> (a, n-1, 0) \\
        $(-n,b,0) \in B$, $|b| < n$ \> (-(n-1),b,0) \\
        ``Interior" points of $W_n$: \\
        $(a,b,0)$, $|a| < n$, $|b| < n$ \> $(a,b,1)$
\end{tabbing}
\end{proof}

\begin{thm}
\label{minFreezeForQ_n}
    $A=\{U\} \cup W_n$ is the unique minimal
    freezing set for $(Q_n,c_3)$.
\end{thm}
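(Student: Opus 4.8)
The plan is to combine Proposition~\ref{W_nInFreezingSet} with a direct verification that $A=\{U\}\cup W_n$ is a freezing set. Once freezing is established, minimality is immediate: by Proposition~\ref{W_nInFreezingSet} every freezing set for $(Q_n,c_3)$ contains $A$, so no proper subset of $A$ can be freezing. Thus the whole task reduces to showing that if $g\in C(Q_n,c_3)$ satisfies $g|_A=\id_A$ (so $g$ fixes $U$ and every point of the base $W_n$), then $g=\id_{Q_n}$.

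The governing difficulty is that, unlike the hollow pyramid of Theorem~\ref{P_nMinFreeze}, the vertical projection $(a,b,c)\mapsto(a,b)$ is not injective on the solid $Q_n$ (each base point carries a whole column), so there is no isomorphism with a planar square to exploit. I would therefore first prove that $g$ preserves height, i.e. $p_3(g(p))=p_3(p)$ for every $p$, and only afterwards pin the horizontal coordinates slice by slice. Recall that a point $(a,b,c)$ lies in $Q_n$ exactly when $0\le c\le n$ and $\max(|a|,|b|)\le n-c$, and that the points of height $c$ are precisely $W_{n-c}$.

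For the upper bound $p_3(g(p))\le p_3(p)$ I would induct on the height $c=p_3(p)$ upward from the fixed base. If $c\ge 1$ then $(a,b,c-1)\in Q_n$ and is $c_3$-adjacent to $p$; by Theorem~\ref{continuityPreserveAdj} the images are adjacent or equal, hence differ by at most $1$ in each coordinate, so $p_3(g(p))\le p_3(g(a,b,c-1))+1\le (c-1)+1=c$ by the inductive hypothesis. For the lower bound $p_3(g(p))\ge p_3(p)$ I would induct downward from the fixed apex $U$. The point directly above $p$ need not lie in $Q_n$ when $p$ is on the boundary ring $\max(|a|,|b|)=n-c$ of its slice; the key observation is that one can always choose a \emph{diagonal} upward neighbor $q'=(a',b',c+1)\in Q_n$ with $|a-a'|\le 1$, $|b-b'|\le 1$ and $\max(|a'|,|b'|)\le n-c-1$ (reduce each of $|a|,|b|$ by one only when it equals $n-c$). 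Continuity then gives $p_3(g(p))\ge p_3(g(q'))-1\ge (c+1)-1=c$. Combining the two bounds yields $p_3\circ g=p_3$, so $g$ carries each slice $W_i$ into itself. I expect this height-preservation step, and specifically the lower bound at boundary-ring points, to be the main obstacle, since it is where the failure of injectivity of the projection bites.

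With height fixed, the horizontal coordinates succumb to a bottom-up induction on the slices using the pulling Lemma~\ref{c1pulling}. Assume $W_{i+1}$ is fixed (the base $W_n$ is the anchor). For $p=(a,b,n-i)\in W_i$, the four points $(a\pm1,b,n-i-1)$ and $(a,b\pm1,n-i-1)$ lie in the larger lower slice $W_{i+1}$ and are therefore fixed; since each is $c_3$-adjacent to $p$, applying part~1 of Lemma~\ref{c1pulling} against $(a-1,b,n-i-1)$ forces $p_1(g(p))\le a$ and part~2 against $(a+1,b,n-i-1)$ forces $p_1(g(p))\ge a$, so $p_1(g(p))=a$; likewise $p_2(g(p))=b$. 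Together with $p_3(g(p))=n-i$ this gives $g(p)=p$, so $W_i$ is fixed, and the induction yields $g=\id_{Q_n}$. Note that the larger size of the lower slice is exactly what guarantees the needed fixed neighbors exist for every $p\in W_i$, including its boundary ring.
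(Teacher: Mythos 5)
Your proof is correct, but it follows a genuinely different route from the paper's. The paper never touches the interior of $Q_n$ directly: it first fixes the four lateral edges $RF, RR, LR, LF$ by observing each is the unique shortest $c_3$-path from a fixed base corner to $U$ and invoking Proposition~\ref{uniqueShortestProp}; it then pins $p_3$, $p_1$, $p_2$ on the four triangular faces by induction arguments anchored on those edges and the base; having fixed $F \cup R \cup B \cup L \cup W_n = Bd(Q_n)$, it concludes via the general boundary theorem (Theorem~\ref{bdFreezes}) that $f = \id_{Q_n}$. You instead prove height preservation $p_3 \circ g = p_3$ on \emph{all} of $Q_n$ (two inductions: upward from the fixed base using the point directly below, downward from the fixed apex using your diagonal-neighbor device at boundary rings), and then fix each horizontal slice bottom-up with Lemma~\ref{c1pulling}, exploiting the fact that every point of $W_i$ has its four relevant $c_3$-neighbors in the already-fixed larger slice $W_{i+1}$. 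Your argument is more self-contained --- it needs neither Proposition~\ref{uniqueShortestProp} nor Theorem~\ref{bdFreezes}, and it treats surface and interior points uniformly --- while the paper's approach buys economy by outsourcing the entire interior to the boundary-freezing theorem, at the cost of the edges-first step and face-by-face case analysis. Both handle minimality identically, via Proposition~\ref{W_nInFreezingSet}.
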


\begin{proof}
    Let $f \in C(Q_n,c_3)$ such that $f|_A = \id_A$.

As each of $LR,LF,RF,RR$ is the unique shortest $c_3$-path
between its endpoints, by Proposition~\ref{uniqueShortestProp},
\begin{equation}
\label{vertEdgesFixed}
    LR \cup LF \cup RF \cup RR \subset \Fix(f).
\end{equation}

Next, we show $L \subset \Fix(f)$. Let 
\[ S_k = \{(j,k,n-k) \mid -k \le j \le k\} \subset L.
\]
This is a digital segment in~$L$
\begin{equation}
\label{endptsFixed}
    \mbox{with endpoints $(-k,-k,n-k),~(k,-k,n-k)$ in
$LR \cup LF \subset \Fix(f)$.}
\end{equation}
 Suppose for a contradiction
that for some $x \in S_k$ we have $x \not \in \Fix(f)$.
Then, for some index $m \in \{1,2,3\}$,
$p_m(f(x)) \neq p_m(x)$.
\begin{itemize}
    \item Suppose $p_1(f(x)) > p_1(x)$. Then by 
    Corollary~\ref{pullingSegment}, (at the ``back"
    endpoint of $S_k$,) $p_1(f(-k,-k,n-k)) > -k$,
    contrary to~(\ref{endptsFixed}).
    \item Suppose $p_1(f(x)) < p_1(x)$. Then by 
    Corollary~\ref{pullingSegment}, (at the ``front"
    endpoint of $S_k$,) $p_1(f(k,-k,n-k)) < k$,
    contrary to~(\ref{endptsFixed}).
    \end{itemize}
Thus we must have 
\begin{equation}
\label{p_1InQ_n}
    p_1(f(x)) = p_1(x).
\end{equation}

\begin{itemize}
    \item Suppose $p_2(f(x)) > p_2(x)$. Without loss of
    generality, $x=(a,-k,n-k)$ where
    \[ a = \min_{y \in S_k} \{p_2(y) \mid p_2(f(y)) \neq p_2(y) \}.
    \]
    By~(\ref{endptsFixed}), $a > -k$, so 
    $y=(a-1,-k,n-k) \in S_k$, and our choice of~$a$
    implies
    \[ p_2(x) - p_2(y) > a - (a-1) = 1,
    \]
    which is impossible since $y \adj_{c_3} x$.
    \item Suppose $p_2(f(x)) < p_2(x)$. Without loss of
    generality, $x=(a',-k,n-k)$ where
    \[ a' = \max_{y \in S_k} \{p_2(y) \mid p_2(f(y)) \neq p_2(y) \}.
    \]
    By~(\ref{endptsFixed}), $a' < k$, so 
    $y=(a'+1,-k,n-k) \in S_k$, and our choice of~$a'$
    implies
    \[ p_2(x) - p_2(y) < a' - (a'+1) = -1,
    \]
    which is impossible since $y \adj_{c_3} x$.
 \end{itemize}
Thus we must have 
\begin{equation}
\label{p_2InQ_n}
    p_2(f(x)) = p_2(x).
\end{equation}

It is easily seen that there is a $c_3$-path~$P$ in~$Q_n$
from~$(a,-k,0) \in W_n \subset \Fix(f)$ through~$x$ 
to~$U \in \Fix(f)$ such that~$P$ is monotone increasing in
the $3^{rd}$ component. It follows from
Corollary~\ref{pullingSegment} that
\begin{equation}
\label{p_3InQ_n}
    p_3(f(x)) = p_3(x).
\end{equation}

From~(\ref{p_1InQ_n}), (\ref{p_2InQ_n}), and (\ref{p_3InQ_n}),
$x \in \Fix(f)$. Since~$x$ was taken arbitrarily,
$L \subset \Fix(f)$.

Similarly, $F \cup R \cup B \subset \Fix(f)$. Thus, so
far we have that $P_n \cup W_n \subset \Fix(f)$.

Now let $x = (a,b,h) \in Q_n \setminus (P_n \cup W_n)$.
There exist digital $c_3$-paths through~$x$,
\begin{itemize}
    \item $L_1$ from $(-n,b,h) \in B \subset \Fix(f)$ 
    to $(n,b,h) \in F \subset \Fix(f)$ that is monotone
    increasing in the first coordinate;
    \item $L_2$ from $(a,-n,h) \in L \subset \Fix(f)$
          to $((a,n,h) \in R \subset \Fix(f)$ that is monotone
    increasing in the $2^{nd}$ coordinate; and
    \item $L_3$ from $(a,b,0) \in W_n \subset \Fix(f)$ 
    to~$U \in \Fix(f)$ that is monotone increasing in the
    $3^{rd}$ coordinate.
\end{itemize}
By Corollary~\ref{pullingSegment}, $x \in \Fix(f)$.
We thus conclude that $f = \id_{Q_n}$, so~$A$ is a
freezing set.

That $A$ is the unique freezing set for $(Q_n,c_3)$ follows
from Lemma~\ref{W_nInFreezingSet}.
\end{proof}

\subsection{Cold 
sets for $P_n$ and $Q_n$}
Given a continuous $f: (X,\kappa) \to (X, \kappa)$,
a freezing set can so restrict~$f$ that~$f$ must 
be~$\id_X$. By contrast, a cold set, at most,
merely restriction on how much~$f$ may vary from an 
identity function; often, such restrictions are
satisfied by multiple functions.
Thus, while we often find that a digital
image has a unique minimal freezing set, the cold
and $(1,1)$-limiting sets shown below are not
shown to be minimal (indeed, in some cases they are
easily shown non-minimal).

As in section~\ref{FreezeP_nSection}, we use
an isomorphism between $(P_n,c_3)$ and
$([-n,n]_{\Z}^2, c_2)$
(Lemma~\ref{pyramidLikeSquare}).

\begin{thm}
    \label{ColdOrLimitedSquare}
    Let $A$ be any subset of
    \[ \{(-n,-n), (n,-n), (n,n), (-n,n)\}
     \subset [-n,n]_{\Z}^2.
    \]
    Then $Bd([-n,n]_{\Z}^2) \setminus A$ is
    a cold set 
    for $([-n,n]_{\Z}^2,c_2)$.
\end{thm}

\begin{proof}
If $A = \emptyset$ then, since a freezing set is
a cold set, the assertion follows from
Theorem~\ref{bdFreezes}. We proceed assuming
$A \neq \emptyset$.

Let $f \in C([-n,n]_{\Z}^2,c_2)$ such that 
\begin{equation}
\label{mostBdBfixed}
     \mbox{for }
x \in Bd([-n,n]_{\Z}^2) \setminus A,~~~
f(x) = x.
\end{equation}

We must show that for all~$x = (a,b) \in 
[-n,n]_{\Z}^2$,
$x \adjeq_{c_2} f(x)$. This is done as follows. 

If $x \in [-n,n]_{\Z}^2 \setminus Bd([-n,n]_{\Z}^2$, then $-n < a < n$ and $-n < b < n$.
Then $x \in \{(a,y) \mid -n \le y \le n \}$, a set
that is monotone in its second coordinate and 
whose endpoints, $(a,-n)$ and $(a,n)$, 
are in~$\Fix(f)$. It follows from 
Corollary~\ref{pullingSegment} that
$p_2(f(x)) = p_2(x)$. A similar argument shows
$p_1(f(x)) = p_1(x)$. Thus, $x \in \Fix(f)$.

Combining with~(\ref{mostBdBfixed}), we now have
$[-n,n]_{\Z}^2 \setminus A \subset \Fix(f)$.
Since $f \in C([-n,n]_{\Z}^2, c_2)$, we must have
$x \in A$ implies $f(x) \adjeq_{c_2} f(y)=y$ for
every $y \adj x$. 
Thus for all $x \in [-n,n]_{\Z}^2$, 
$x \adjeq_{c_2} f(x)$, as desired.
\end{proof}

\begin{remark}
It does not follow from the proof of
Theorem~\ref{ColdOrLimitedSquare} that~$f$
is an identity function. Indeed, since
$f \in C(X,c_2)$, we have the
following.
\begin{itemize}
    \item If $(-n,-n) \in A$ then 
    $f(-n,-n) \in \{ (-n,-n),~ (-(n-1), -(n-1)) \}$;
     \item if $(-n,n) \in A$ then 
           $f(-n,n) \in \{ (-n,n),~(-(n-1), n-1) \}$;
    \item if $(n,n) \in A$ then 
           $f(n,n) \in \{ (n,n),~(n-1, n-1) \}$;
    \item if $(n,-n) \in A$ then 
           $f(n,-n) \in \{ (n,-n),~(n-1, -(n-1)) \}$.
\end{itemize}
\end{remark}

\begin{cor}
    \label{P_nCold}
    Recall
    \[ T_n' = \{(-n,-n,0), (n,-n,0), (n,n,0), (-n,n,0)\}.
    \]
    Let $A'$ be any subset of $T_n'$.    
    Then $(T_n \setminus A')$ is 
    a cold set 
    for $(P_n,c_3)$.
\end{cor}

\begin{proof}
By Lemma~\ref{pyramidLikeSquare},
    there is an isomorphism 
    $F: ([-n,n]_{\Z}^2,c_2) \to (P_n,c_3)$
    with
    \begin{itemize}
        \item $F(Bd([-n,n]_{\Z}^2) = T_n$, and
        \item if $A$ is as in Theorem~\ref{ColdOrLimitedSquare}
    then $F(A) = A'$.
    \end{itemize}
    The asssertion follows
    from Theorem~\ref{ColdOrLimitedSquare} and
    Theorem~\ref{s-cold-invariant}.
\end{proof}

\begin{remark}
    Other cold sets for $(P_n,c_3)$ can
    be found by using similar methods along
    with Proposition~6.8
    of~{\rm \cite{BxFpSets2}}.
\end{remark}

\begin{remark}
    By an argument similar to that used to
    prove Corollary~\ref{P_nCold}, one can
    show the following.

    Let $A \subset W_n$ be any subset of~$T_n'$.
    Then $W_n \setminus A$ is a cold set for
    $(Q_n,c_3)$.    
\end{remark}

\section{Freezing and cold sets for suspension-like images}
Let
\[ H_n = P_n \cup P_n', \mbox{ where }
    P_n' = \{ \, (a,b,c) \in \Z^3 \mid
    (a,b,-c) \in P_n \, \}
\]
$H_n$ may be regarded as a digital
model of the topological suspension
of the square (border only) modeled
by $T_n$. $H_n$ may also be regarded as
made of 2 copies of $P_n$ with their
bases sewn together.

Let
\[ K_n = Q_n \cup Q_n', \mbox{ where }
    Q_n' = \{ \, (a,b,c) \in \Z^3 \mid
    (a,b,-c) \in Q_n \, \}
\]
$K_n$ may be regarded as a digital
model of the topological suspension
of the square (with interior) modeled
by $W_n$. $K_n$ may also be regarded as
made of 2 copies of $Q_n$ with their
bases sewn together.

As above, we let $U$ be
the highest point (and $L$, the lowest 
point) of $H_n$ (and of $K_n$):
\[ U = (0,0,n),~~~~~L = (0,0,-n).
\]
See Figure~\ref{fig:suspendedSquare}.

\begin{figure}
    \centering
    \includegraphics[height=3in]{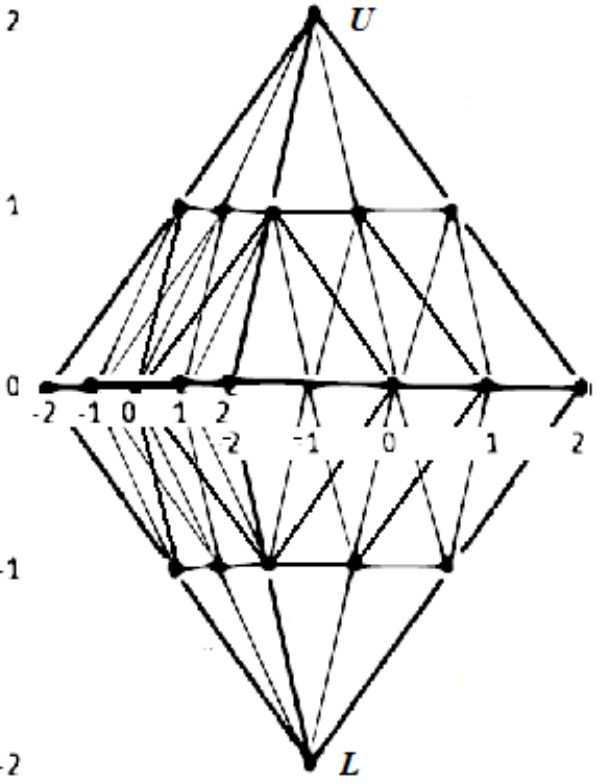}
    \caption{Left and front sides of
    the suspension-like
    digital image $(H_2,c_3)$}
    \label{fig:suspendedSquare}
\end{figure}

We have the following.

\begin{thm}
\label{H_nFreeze}
    $A = \{U,L\} \cup T_n$ is the unique minimal
    freezing set for~$(H_n,c_3)$.
\end{thm} 

\begin{proof}
Let $f \in C(H_n,c_3)$ such that
    $f|_{T_n} = \id_{T_n}$.

    First, we show $f(P_n) \subset P_n$.
    Let $x = (a,b,c) \in P_n$. Suppose,
    in order to obtain a contradiction, that $f(x) \not \in P_n$. 
    Then $p_3(f(x)) < 0$, so
    \[ d(U,x) = n-c < 
    n+\mid p_3(f(x)) \mid \, 
    = d(U,f(x)) = 
    d(f(U),f(x)),
    \]
    which contradicts the continuity
    of~$f$. Thus $f(P_n) \subset P_n$.

    Thus $f|_{P_n}: P_n \to P_n$
    satisfies $f|_{T_n} = 
    \id_{T_n}$. From
    Theorem~\ref{P_nMinFreeze} it
    follows that $f|_{P_n} = \id_{P_n}$.

    Similarly, $f|_{P_n'} = \id_{P_n'}$.
    It follows that $A$ is a freezing
    set for $(H_n,c_3)$.

Next, we claim the following.

\begin{equation} 
\label{polesAndT_nInK_nFreezingSet}
    A = \{U,L\} \cup T_n \mbox{ is a subset of 
    every freezing set for }(K_n,c_3).
\end{equation}

This follows from an argument similar to the
  proof of Proposition~\ref{W_nInFreezingSet}, where
  the additional point~$L=(0,0,-n)$ of~$A$ 
  corresponds to~$y$ of Proposition~\ref{nbrPropertyForEssentialFreezePt},
  where we take $y'=(0,0,-(n-1))$.

  The claim of uniqueness follows from
  (\ref{polesAndT_nInK_nFreezingSet}).
\end{proof}

\begin{thm}
\label{minK_nFreezingSet}
    $\{U,L\} \cup W_n$ is the unique minimal
    freezing set for $(K_n,c_3)$.
\end{thm}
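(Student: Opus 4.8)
The plan is to combine Proposition~\ref{polesAndT_nInK_nFreezingSet}, which already shows $\{U,L\}\cup T_n$ lies inside every freezing set for $(K_n,c_3)$, with a proof that $A=\{U,L\}\cup T_n$ is itself freezing; minimality then follows at once. So I would fix $f\in C(K_n,c_3)$ with $f|_A=\id_A$ and aim to prove $f=\id_{K_n}$.

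First I would show that $f$ preserves each solid half, i.e. $f(Q_n)\subseteq Q_n$ and $f(Q_n')\subseteq Q_n'$, by reusing the distance argument from the proof of Theorem~\ref{H_nFreeze}. Continuous maps are non-expansive for the shortest-path metric, so $d_{K_n}(f(x),f(U))\le d_{K_n}(x,U)$. For $x=(a,b,c)\in Q_n$ one checks $d_{K_n}(x,U)\le n$ by climbing toward the apex while contracting the horizontal coordinates as the pyramid narrows. If $f(x)\notin Q_n$ then $f(x)\in Q_n'$ with $p_3(f(x))<0$, forcing $d_{K_n}(f(x),U)\ge n+\mid p_3(f(x))\mid\, >n$ since every $c_3$-step changes the height by at most $1$; as $f(U)=U$, this contradicts non-expansiveness. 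Hence $f(Q_n)\subseteq Q_n$, and the symmetric argument using $L$ gives $f(Q_n')\subseteq Q_n'$.

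The crucial extra step, absent from the hollow case $H_n$, is to recover the \emph{entire} base $W_n$, not merely its border $T_n$. Since $Q_n\cap Q_n'=W_n$, the two inclusions just proved give $f(W_n)\subseteq W_n$, so $f|_{W_n}$ is a continuous self-map of $W_n$ fixing $T_n$. Now $(W_n,c_3)$ is isomorphic to $([-n,n]_{\Z}^2,c_2)$ via the projection $(a,b,0)\mapsto(a,b)$, under which $T_n$ corresponds to $Bd([-n,n]_{\Z}^2)$; Theorem~\ref{bdFreezes} together with Theorem~\ref{freezeInvariant} then shows $T_n$ is a freezing set for $(W_n,c_3)$, whence $f|_{W_n}=\id_{W_n}$.

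Finally, $f|_{Q_n}\colon Q_n\to Q_n$ is continuous and now fixes $\{U\}\cup W_n$, which by Theorem~\ref{minFreezeForQ_n} is a freezing set for $(Q_n,c_3)$; therefore $f|_{Q_n}=\id_{Q_n}$, and the mirror argument with $\{L\}\cup W_n$ gives $f|_{Q_n'}=\id_{Q_n'}$. Since $K_n=Q_n\cup Q_n'$, this yields $f=\id_{K_n}$, so $A$ is freezing and, by Proposition~\ref{polesAndT_nInK_nFreezingSet}, minimal. I expect the main obstacle to be the bridging step of the third paragraph: one must verify that $f$ genuinely maps $W_n$ into itself, which is exactly where the suspension structure, pinning the equator between the two solid halves, is used, and one must invoke the planar boundary-freezing theorem through the correct isomorphism, since $Bd$ computed directly in $\Z^3$ would swallow all of $W_n$ and be useless.
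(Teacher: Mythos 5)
Your proof is correct, and while it shares the paper's overall skeleton (minimality from Proposition~\ref{polesAndT_nInK_nFreezingSet}, confinement of each solid half via a distance-to-pole argument, recovery of the base $W_n$, then two applications of Theorem~\ref{minFreezeForQ_n}), it handles the crucial middle step --- showing $W_n \subset \Fix(f)$ --- by a genuinely different mechanism. The paper first proves the stronger statement $p_3(f(x)) = p_3(x)$ for \emph{all} $x \in K_n$ (playing the two poles against each other), and then pins the $p_1$ and $p_2$ coordinates of each $x \in W_n$ by applying the pulling Lemma~\ref{c1pulling} toward the fixed points $(\pm n,b,0)$ and $(a,\pm n,0)$ of $T_n$; with all three coordinates held, $f(x)=x$. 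You instead observe that the two half-confinements $f(Q_n) \subseteq Q_n$ and $f(Q_n') \subseteq Q_n'$ already force $f(W_n) \subseteq Q_n \cap Q_n' = W_n$, so $f|_{W_n}$ is a continuous self-map of $(W_n,c_3)$ fixing $T_n$; you then transport Theorem~\ref{bdFreezes} across the isomorphism $(a,b,0) \mapsto (a,b)$ via Theorem~\ref{freezeInvariant} to conclude that $T_n$ freezes $W_n$, hence $f|_{W_n} = \id_{W_n}$. Your route buys modularity: it needs only sign-preservation of $p_3$ rather than exact height preservation, and it replaces the hands-on coordinate-pulling induction with citations of existing theorems, exploiting the suspension structure (the equator is the intersection of the two invariant halves) in a clean way. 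The paper's route is more self-contained and yields the slightly stronger, though unneeded, fact that $f$ preserves every horizontal level. You are also right about the pitfall you flag: $Bd(W_n)$ computed in $\Z^3$ is all of $W_n$, so the boundary-freezing theorem must indeed be invoked in the plane and pulled back through the isomorphism, exactly as you do.
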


\begin{proof}
    This follows from an argument similar to
    that used to prove Theorem~\ref{H_nFreeze}.
\end{proof}

\begin{remark}
    By using methods as in the proof of
    Theorem~\ref{ColdOrLimitedSquare}, we
    get the following:

    Let $A$ be any subset of $T_n'$.
    Then 
    \begin{itemize}
    \item $\{U,L\} \cup (T_n \setminus A)$ is a
    cold set for $(H_n,c_3)$.
    \item $\{U,L\} \cup (W_n \setminus A)$ is a
    cold set for $(K_n,c_3)$.
    \end{itemize}
\end{remark}

\section{Further remarks}
Models for digital topology of
the topological constructions
of cone and suspension were
developed by Lupton,
Oprea, and Scoville
in~\cite{LuptonEtal}.
We have obtained some
$(m,n)$-limiting sets for
these constructions. We have
also found freezing and cold sets for
some cone-like and suspension-like digital images.

We are grateful for the suggestions and
corrections of an anonymous referee.

\end{document}